\title{Algebraic properties of the ring $C(X)_\mathcal{P}$}
\theoremstyle{plain}
\newtheorem{theorem}{Theorem}[section]
\title{Algebraic properties of the ring $C(X)_\mathcal{P}$}
\theoremstyle{plain}
\theoremstyle{definition}
\newtheorem{definition}[theorem]{Definition}
\newtheorem{remark}[theorem]{Remark}
\newtheorem{counter example}[theorem]{Counter Example}
\newtheorem{corollary}[theorem]{Corollary}
\newtheorem{example}[theorem]{Example}
\numberwithin{equation}{section}
\author[S. Dey]{Soumajit Dey}	\address{Department of Pure Mathematics, University of Calcutta, 35, Ballygunge Circular Road, Kolkata 700019, West Bengal, India}	\email{deysoumajit8@gmail.com}
\author[S. K. Acharyya]{Sudip Kumar Acharyya}	\address{Department of Pure Mathematics, University of Calcutta, 35, Ballygunge Circular Road, Kolkata
	700019, West Bengal, India}	\email{sdpacharyya@gmail.com}
\author[D. Mandal]{Dhananjoy Mandal} \address{Department of Pure Mathematics, University of Calcutta, 35 , Ballygunge Circular Road, Kolkata 700019, West Bengal, India}  \email{dmandal.cu@gmail.com / dmpm@caluniv.ac.in}
\keywords{$\chi$-rings, Gelfand rings, hull-kernel topology, Stone topology, $\mathcal{Z}_S$-ideal, local invertibility of functions, ultrafilter of measurable sets.}
\subjclass[2020]{Primary 54C40; Secondary  46E30 }
\begin{document}
	
	\title [Structure spaces and allied problems on a class of  $\mathcal{M}(X,\mathcal{A})$ ]{Structure spaces and allied problems on a class of rings of measurable functions }

	\thanks {The first author extends immense gratitude and thanks to the University Grants Commission, New Delhi, for the award of research fellowship (NTA Ref. No. 211610214962).}

	\begin{abstract}
		A ring $S(X,\mathcal{A})$ of real valued $\mathcal{A}$-measurable functions defined over a measurable space $(X,\mathcal{A})$ is called a $\chi$-ring if for each $E\in \mathcal{A}  $, the characteristic function $\chi_{E}\in S(X,\mathcal{A})$. The set $\mathcal{U}_X$ of all $\mathcal{A}$-ultrafilters on $X$ with the Stone topology $\tau$ is seen to be homeomorphic to an appropriate quotient space of the set $\mathcal{M}_X$ of all maximal ideals in $S(X,\mathcal{A})$ equipped with the hull-kernel topology $\tau_S$. It is realized that $(\mathcal{U}_X,\tau)$ is homeomorphic to $(\mathcal{M}_S,\tau_S)$ if and only if  $S(X,\mathcal{A})$ is a Gelfand ring. It is further observed that $S(X,\mathcal{A})$ is a Von-Neumann regular ring if and only if each ideal in this ring is a $\mathcal{Z}_S$-ideal and $S(X,\mathcal{A})$ is Gelfand when and only when every maximal ideal in it is a $\mathcal{Z}_S$-ideal. A pair of topologies $u_\mu$-topology and $m_\mu$-topology,  are introduced on the set $S(X,\mathcal{A})$ and a few properties are studied. 
			\end{abstract}	
	\maketitle

	\section{Introduction}
	Our beginning is with a pair of objects $(X,\mathcal{A})$, here $X$ is a non empty set and $\mathcal{A}$, a $\sigma$-algebra of subsets of $X$. Such a pair is often called a measurable space. A function $f:X\rightarrow \mathbb{R}$ is called $\mathcal{A}$-measurable or simply measurable, when there is no chance of any confusion if for any open set $U$ in $\mathbb{R}$, $f^{-1}(U) $ is a measurable set meaning that  $f^{-1}(U)\in\mathcal{A}$. The collection $\mathcal{M}(X,\mathcal{A})$ of all real valued measurable functions on $X$ makes a commutative lattice ordered ring with unity, if the relevant operations are defined pointwise on $X$. The subcollection $\mathcal{M}^*(X,\mathcal{A})$ containing those functions in $\mathcal{M}(X,\mathcal{A})$ which are bounded over $X$, is a subring as well as a sublattice of $\mathcal{M}(X,\mathcal{A})$. Some pertinent problems connected with these two rings have already been addressed by several experts in this area. One can consult the articles \cite{AARD2021}, \cite{AABS2020}, \cite {ABS2020}, \cite{AHM2009}, \cite{EMY2018}, \cite{G1966}, \cite{M2010}, in this connection. We call any ring lying between $\mathcal{M}^*(X,\mathcal{A})$ and $\mathcal{M}(X,\mathcal{A})$ an intermediate ring of measurable functions. As far as we dig into the literature there is only one paper \cite{ABS2020} concerning a few relevant problems on the intermediate rings of measurable functions. We let $\sum(X,\mathcal{A})$ denote the family of all such intermediate rings. It is established in \cite{ABS2020}, amongst others that all the rings in the family $\sum(X,\mathcal{A})$ have identical structure spaces. The structure space of a commutative ring with unity stands for the set of all maximal ideals in the ring, equipped with the hull-kernel topology. Being motivated by this interesting result, in the present paper we endeavor to determine the structure spaces of a kind of subrings of $\mathcal{M}(X,\mathcal{A})$, which we designate as $\chi$-rings and the aggregate of such rings in general, contain the family $\sum(X,\mathcal{A})$ of measurable functions properly. Indeed we call any subring of $\mathcal{M}(X,\mathcal{A})$ which contains the characteristic functions of all measurable sets a $\chi$-ring. It is clear that any intermediate rings of measurable functions is a $\chi$-ring. The aggregate of all simple Lebesgue measurable functions over the closed interval $[0,1]$ is a $\chi$-ring which is not an intermediate ring of Lebesgue measurable functions. One of the important features of a large class of function rings, including the rings of real valued or complex valued continuous functions, intermediate rings of real valued or complex valued continuous functions defined over Tychonoff spaces and intermediate rings of measurable functions defined over measurable spaces, is that each such ring is a Gelfand ring in the sense that every prime ideal in any such ring extends to a unique maximal ideal. This particularly important fact enjoyed by these function rings facilitates a lot towards determining the structure spaces of these function rings, see \cite{ABR2021}, \cite{ARN2022}, \cite{AP2016}, \cite{AABJ2021}. In contrast we show in the present article that a $\chi$-ring need not be Gelfand. Incidentally we find out several necessary and sufficient conditions for a $\chi$-ring $S(X,\mathcal{A})$ to be Gelfand. The nature of these conditions vary from being set theoretic to the extent  of being algebraic and also topological. To attain these conditions, we associate with each proper ideal $I$ in $S(X,\mathcal{A})$, a filter $\mathcal{Z}_S[I]$ of measurable subsets of $X$ described as follows: $\mathcal{Z}_S[I]$ stands for the family of those measurable sets $E\in \mathcal{A} $ for which some function $f\in I$ is invertible in $X\setminus E$, meaning that there exists $g\in S(X,\mathcal{A})$ such that $f(x)g(x)=1$ for all $x\in X\setminus E$. If $I$ is a pseudo prime ideal in $S(X,\mathcal{A})$, in particular a maximal ideal in $S(X,\mathcal{A})$, then it turns out $\mathcal{Z}_S[I]$ is an $\mathcal{A}$-ultrafilter on $X$ i.e., a maximal filter of $\mathcal{A}$-measurable sets. On the other hand it is proved that if $\mathcal{U}$ is an $\mathcal{A}$-ultrafilter on $X$, then there is a maximal ideal $M$ in $S(X,\mathcal{A})$ such that $\mathcal{Z}_S[M]=\mathcal{U}$. Thus the map $\mathcal{Z}_S:\begin{cases}
		\mathcal{M}_S\rightarrow \mathcal{U}(X)\\
		M \rightarrow \mathcal{Z}_S[M]
	\end{cases}$ defined on the set $\mathcal{M}_S$ of all maximal ideals in $S(X,\mathcal{A})$ into the set $\mathcal{U}(X)$ of all $\mathcal{A}$-ultrafilters on $X$ is essentially onto $\mathcal{U}(X)$. In general this last map is not one-to-one. We furnish a counter example in this paper to support this statement. However we establish that the map $\mathcal{Z}_S:\mathcal{M}_S\rightarrow \mathcal{U}(X)$ is one-to-one if and only if $S(X,\mathcal{A})$ is a Gelfand ring. If the $\chi$-ring $S(X,\mathcal{A})$ is in particular an intermediate ring of measurable functions defined over the measurable space $(X,\mathcal{A})$, then the map $\mathcal{Z}_S:\mathcal{M}_S\rightarrow \mathcal{U}(X)$  becomes a homeomorphism if $\mathcal{M}_S$ is equipped with the hull-kernel topology $\tau_S$ and the Stone topology $\tau$ is imposed on $\mathcal{U}(X)$ [see Theorem $4.8$ in \cite{ABS2020}]. Things are not that pleasant here for an arbitrary $\chi$-ring $S(X,\mathcal{A})$. In fact a suitable quotient space of $(\mathcal{M}_S,\tau_S)$ is seen to be homeomorphic to the space $(\mathcal{U}(X),\tau)$. A little elaboration may help to realize this situation. On defining an equivalence relation $\sim$ on $\mathcal{M}_S$ by writing $M\sim N$, $M,N \in \mathcal{M}_S$ if and only if $\mathcal{Z}_S[M]=\mathcal{Z}_S[N]$ and then defining the quotient topology $\tau_q$ on the set $\mathcal{M}_S/\sim$ induced by the natural map $\Pi:\begin{cases}
	\mathcal{M}_S\rightarrow \mathcal{M}_S/\sim \\
	M\rightarrow [M]
	
\end{cases}$ where $[M]=\{N\in \mathcal{M}_S:M\sim N\}$, it is proved in the present article that the space $(\mathcal{M}_S/\sim,\tau_q)$ is homeomorphic to the space $(\mathcal{U}(X),\tau)$. It is deduced from this result that the map $\mathcal{Z}_S:\begin{cases}
(\mathcal{M}_S,\tau_S)\rightarrow(\mathcal{U}(X),\tau)\\ 
M\rightarrow \mathcal{Z}_S[M]
\end{cases}$ is a homeomorphism if and only if $S(X,\mathcal{A})$ is a Gelfand ring. Since the structure space of any intermediate ring $N(X,\mathcal{A})$ of measurable functions defined over $(X,\mathcal{A})$ is already homeomorphic to the space $(\mathcal{U}(X),\tau)$, under the map $\mathcal{Z}_S$, as mentioned above, it follows that each such ring $N(X,\mathcal{A})$ is a Gelfand ring, a fact possibly missed by the authors in \cite{ABS2020}. In this context we would like to mention that a topology $\tau_\chi$ on $\mathcal{M}_S$, defined via the characteristic functions of measurable sets in $X$ and coarser than the hull-kernel topology $\tau_S$ on $\mathcal{M}_S$ is introduced in an appropriate place. It is realized that the map $\mathcal{Z}_S^\chi:\begin{cases}
(\mathcal{M}_S,\tau_\chi)\rightarrow (\mathcal{U}(X),\tau)\\M\rightarrow\mathcal{Z}_S[M]
\end{cases}$ is a homeomorphism if and only if $S(X,\mathcal{A})$ is a Gelfand ring. Thus altogether four topologies viz $\tau_S,\tau_\chi$ on $\mathcal{M}_S$, $\tau_q$ on $\mathcal{M}_S/\sim$ and $\tau$ on $\mathcal{U}(X)$ are introduced in course of determining the structure space of the $\chi$-ring $S(X,\mathcal{A})$. It turns out that if $S(X,\mathcal{A})$ is a Gelfand ring, then  all these four topologies are pairwise homoemorphic.
\newline The results as obtained in this article and deliberated as above constitute a bulk of Section 2 and Section 3 of the paper. It is well known that the ring $\mathcal{M}(X,\mathcal{A})$ of all real valued measurable functions defined over a measurable space $(X,\mathcal{A})$ is Von-Neumann regular \cite{ABS2020}, \cite{AHM2009}. In contrast in this present article we realized that a $\chi$-ring $S(X,\mathcal{A})$ is not necessarily Von-Neumann regular. However in the Section $4$ of this article we find out several conditions each necessary and sufficient for the Von-Neumann regularity of $S(X,\mathcal{A})$ and one such typical condition is : if $f,g\in S(X,\mathcal{A})$ and $Z(f)\subseteq Z(g)$, then $g$ is a multiple of $f$ in $S(X,\mathcal{A})$, here $Z(f)=\{x\in X:f(x)=0\}$ is the zero set of the function $f$. It follows from the equivalence of this condition with Von-Neumann regularity of $S(X,\mathcal{A})$ that, no intermediate ring lying strictly between $\mathcal{M}^*(X,\mathcal{A})$ and $\mathcal{M}(X,\mathcal{A})$ can ever be Von-Neumann regular. This fact is proved independently in \cite{ABS2020}. Incidentally a second typical condition in this list of necessary and sufficient conditions for Von-Neumann regularity of $S(X,\mathcal{A})$ says that: $S(X,\mathcal{A})$ is Von-Neumann regular if and only if each ideal $I$ in $S(X,\mathcal{A})$ is a $\mathcal{Z}_S$-ideal. $I$ is called a $\mathcal{Z}_S$-ideal in $S(X,\mathcal{A})$ if whenever $\mathcal{Z}_S(f)\subseteq \mathcal{Z}_S[I]$, $f\in S(X,\mathcal{A})$, the conclusion is that $f\in I$, here $\mathcal{Z}_S(f)=\{E\in \mathcal{A}:f \text{ is invertible in } X\setminus E \text{ in the ring } S(X,\mathcal{A})\}$ and we recall that $\mathcal{Z}_S[I]=\bigcup\limits_{g\in I} \mathcal{Z}_S(g) $. We would like to mention at this point that the Gelfandness of this ring $S(X,\mathcal{A})$ can be captured via this $\mathcal{Z}_S$-ideal in the following manner: $S(X,\mathcal{A})$ is Gelfand if and only if each maximal ideal in $S(X,\mathcal{A})$ is a $\mathcal{Z}_S$-ideal.
\newline  In the final and Section 5 of this article we initiate two topologies viz the $u_\mu$-topology and the $m_\mu$-topology on the set $S(X,\mathcal{A})$ in the presence of a measure $\mu$ on the $\sigma$-algebra $\mathcal{A}$ on $X$. A map $\mu:\mathcal{A}\rightarrow[0,\infty]$ is called a measure if $\mu(\emptyset)=0$ and $\mu$ is countably additive in the sense that if $\{E_n\}_{n=1}^\infty$ is a countable family of pairwise disjoint members of $\mathcal{A}$, then $\mu(\bigcup\limits_{n=1}^\infty E_n)=\sum\limits_{n=1}^\infty \mu(E_n)$. 

Let $\mathcal{A}_0=\{A\in \mathcal{A}:\mu(A)=0\}$. 
In this section $(5)$ we make an additional assumption that each constant function $r , r\in \mathbb{R}$ is a member of $S(X,\mathcal{A})$. Here $r$ stands for the function $r(x)=r$ for each $x\in X$. It is easy to observe that each intermediate ring of measurable functions satisfies this additional condition. We introduce the following notations which we will use throughout the Section 5 of this article. Let $f\in S(X,\mathcal{A})$, $u\in \mathcal{M}(X,\mathcal{A})$ with $u(x)>0$ for each $x\in X$, $\epsilon>0$ in $\mathbb{R}$  and  $E_0\in \mathcal{A}_0$. We set $u(f,\epsilon, E_0)=\{g\in S(X,\mathcal{A}):\sup\limits_{x\in X\setminus E_0}|f(x)-g(x)|<\epsilon\}$ and $m(f,u,E_0)=\{g\in S(X,\mathcal{A}):|f(x)-g(x)|<u(x)\text{ for each } x\in X\setminus E_0\}$. Then the family $\{u(f,\epsilon,E_0):f\in S(X,\mathcal{A}),\epsilon>0 \text{ in }\mathbb{R}, E_0\in \mathcal{A}_0\}$ is an open base for a topology which we call the $u_\mu$-topology on the set $S(X,\mathcal{A})$. It is further realized that the family $\{m(f,u,E_0):f\in S(X,\mathcal{A}), u\in \mathcal{M}(X,\mathcal{A}), u(x)>0 \text{ for each }x\in X\text{ and } E_0\in \mathcal{A}_0\}$ too is an open base for a topology finer than the $u_\mu$-topology and which we designated as the $m_\mu$-topology on $S(X,\mathcal{A})$. We show that $S(X,\mathcal{A})$ with the $u_\mu$-topology is a pseudometrizable topological group while $S(X,\mathcal{A})$ with the $m_\mu$-topology is a topological ring. The component of the function $0$ in  $S(X,\mathcal{A})$ in the $u_\mu$-topology is found out to be $L^\infty(X,\mathcal{A},\mu)\cap S(X,\mathcal{A})$, here $L^\infty(X,\mathcal{A},\mu)$ stands for the set of all those functions in $\mathcal{M}(X,\mathcal{A})$, which are essentially bounded ( with respect to the measure $\mu$ ) on $X$. It follows from this result that a $\chi$-ring $S(X,\mathcal{A})$ is connected in the $u_\mu$-topology if and only if it is contained in $L^\infty(X,\mathcal{A},\mu)$. We find out several conditions each necessary and sufficient for the coincidence of the two topologies viz the $u_\mu$-topology and the $m_\mu$-topology on $S(X,\mathcal{A})$. One such condition says that $u_\mu$-topology = $m_\mu$-topology on $S(X,\mathcal{A})$ if and only if the $m_\mu$-topology is first countable.
 	\section{Ideals in $\chi$-rings versus filters of measurable sets.}
	\begin{definition}
		We call a ring $S(X,\mathcal{A})$ a $\chi$-ring if it is a subring of $\mathcal{M}(X,\mathcal{A})$ and if in addition for any $A\in \mathcal{A}$, the characteristic function $\chi_A \in S(X,\mathcal{A})$. It follows immediately that the ring $\mathcal{M}^*(X,\mathcal{A})$ is a $\chi$-ring, in particular all the intermediate rings of measurable functions lying between $\mathcal{M}^*(X,\mathcal{A})$ and $\mathcal{M}(X,\mathcal{A})$ are $\chi$-rings. An ideal $I$ unmodified in the ring $S(X,\mathcal{A})$ always stands for a proper ideal, i.e., for which $I\neq S(X,\mathcal{A})$.
	\end{definition} It is trivial that $\mathcal{M}(X,\mathcal{A})$ is the largest $\chi$-ring. On the other extreme the ring $\mathcal{M}_F(X,\mathcal{A})=\{f\in \mathcal{M}(X,\mathcal{A}):f(X)\text{ is a finite subset of }\mathbb{Z}\}$, is the smallest $\chi$-ring as can be verified with out any difficulty. If $X$ is an infinite set and the $\sigma$-algebra $\mathcal{A}$ on $X$ is infinite then $\mathcal{M}_F(X,\mathcal{A})$ is a $\chi$-ring which is not intermediate rings. Less elementary example of $\chi$-rings which are not intermediate rings can be constructed as follows: \newline
 If $I $ is an ideal in the ring $\mathcal{M}(X,\mathcal{A})$, then $\mathcal{M}_F(X,\mathcal{A})+I$ is a $\chi$-ring which is not an intermediate ring. To show that this later class of rings are not an intermediate ring, it suffices to show that $\mathcal{M}_F(X,\mathcal{A})+I \neq \mathcal{M}^*(X,\mathcal{A})$. Indeed the constant function $\frac{1}{2}\notin \mathcal{M}_F(X,\mathcal{A})+I$ for if it be so, then there exists $f \in \mathcal{M}_F(X,\mathcal{A})$ such that $\frac{1}{2}=f+g$ for some $g\in I$. But then $g=\frac{1}{2}-f$ does not vanish any where on $X$, in other words $g$ is a unit in $\mathcal{M}(X,\mathcal{A})$,  a contradiction to the choice that $I$ is a proper ideal in $\mathcal{M}(X,\mathcal{A})$.\newline If the $\sigma$-algebra $\mathcal{A}$ on $X$ is infinite then the set of all simple functions over $(X,\mathcal{A})$ is a $\chi$-ring and is a proper subring of $\mathcal{M}^*(X,\mathcal{A}) $ and is therefore not an intermediate ring.
 \newline In what follows $S(X,\mathcal{A})$ always stands for a $\chi$-ring. Borrowing the terminologies of local invertibility of real valued continuous functions from  \cite{BW1991}, \cite{RW1987}, \cite{RW1997}, we introduce below the appropriate measure theoretic analogue of local invertibility of functions in the $\chi$-ring $S(X,\mathcal{A})$.
 \newline \begin{definition}
 	 An $f\in S(X,\mathcal{A})$ is called $E$-regular , where $E\in \mathcal{A}$, if there exists a $g\in S(X,\mathcal{A})$ such that $fg|_E=1$.
 	 
 \end{definition}
	
For $f\in S(X,\mathcal{A})$, we set $\mathcal{Z}_S(f)=\{E\in \mathcal{A}: f \text{ is }E^c \text{-regular }\}$, here $E^c=X\setminus E$. We set further for any ideal $I$ in $S(X,\mathcal{A})$, $\mathcal{Z}_S[I]=\bigcup\limits_{f\in I}\mathcal{Z}_S(f)$ and for any filter $\mathcal{F}$ of  $\mathcal{A}$-measurable sets in $X$, $\mathcal{Z}_\mathcal{M} ^{-1}[\mathcal{F}]=\{g\in \mathcal{M}(X,\mathcal{A}): \mathcal{Z}_\mathcal{M}(g ) \subseteq \mathcal{F} \}$, here $\mathcal{Z}_\mathcal{M}(g)=\{E\in \mathcal{A}:\text{ there exists } h\in \mathcal{M}(X,\mathcal{A}) \text{ such that } gh|_{E^c}=1\}
$. It is easy to prove by using the techniques adopted in \cite{BW1991}, \cite{RW1987} that $\mathcal{Z}_S(f)$ is an $\mathcal{A}$-filter on $X$ if and only if $f$ is not a unit in $S(X,\mathcal{A})$ and $\mathcal{Z}_S[I]$ is an $\mathcal{A}$-filter on $X$. Furthermore it is easy to check that if $f$ is a non unit in $\mathcal{M}(X,\mathcal{A})$, then $\mathcal{Z}_\mathcal{M}(f)=\{E\in \mathcal{A}:Z(f) \subseteq E\}$. The following theorem shows that each $\mathcal{A}$-filter on $X$ is the $\mathcal{Z}_S$ image of an ideal in $S(X,\mathcal{A})$. \begin{theorem}\label{ preimage of filter}
	Let $\mathcal{F}$ be an $\mathcal{A}$-filter on $X$. Then $\mathcal{Z}_S[\mathcal{Z}_\mathcal{M}^{-1}(\mathcal{F}) \cap S(X,\mathcal{A})]=\mathcal{F}$
\end{theorem}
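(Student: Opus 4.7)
The plan is to prove the set equality by showing both inclusions separately, exploiting the characteristic functions available in the $\chi$-ring for the harder direction. Write $J = \mathcal{Z}_\mathcal{M}^{-1}(\mathcal{F}) \cap S(X,\mathcal{A})$ for brevity, so the goal becomes $\bigcup_{f\in J}\mathcal{Z}_S(f) = \mathcal{F}$.

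For the inclusion $\mathcal{Z}_S[J] \subseteq \mathcal{F}$, I would take $E \in \mathcal{Z}_S(f)$ for some $f \in J$. By definition there exists $g \in S(X,\mathcal{A})$ with $fg|_{E^c} = 1$. Since $S(X,\mathcal{A}) \subseteq \mathcal{M}(X,\mathcal{A})$, this same witness $g$ shows that $E \in \mathcal{Z}_\mathcal{M}(f)$. But $f \in J$ means $\mathcal{Z}_\mathcal{M}(f) \subseteq \mathcal{F}$, so $E \in \mathcal{F}$. This direction is essentially a bookkeeping exercise, using only the inclusion of rings.

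For the reverse inclusion $\mathcal{F} \subseteq \mathcal{Z}_S[J]$, given $E \in \mathcal{F}$, I would produce the witness $f = \chi_{E^c}$, which lies in $S(X,\mathcal{A})$ precisely because $S$ is a $\chi$-ring. Two things must be checked. First, $E \in \mathcal{Z}_S(\chi_{E^c})$: indeed, $\chi_{E^c}\cdot\chi_{E^c}|_{E^c} = 1$, so $\chi_{E^c}$ is $E^c$-regular in $S(X,\mathcal{A})$. Second, $\chi_{E^c} \in \mathcal{Z}_\mathcal{M}^{-1}(\mathcal{F})$: since $Z(\chi_{E^c}) = E$, the formula for $\mathcal{Z}_\mathcal{M}$ on non-units recorded in the excerpt gives $\mathcal{Z}_\mathcal{M}(\chi_{E^c}) = \{F \in \mathcal{A} : E \subseteq F\}$, and this is contained in $\mathcal{F}$ because $E \in \mathcal{F}$ and $\mathcal{F}$ is closed under supersets. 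Combining these, $E$ belongs to $\mathcal{Z}_S[J]$, completing the proof.

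There is no real obstacle; the only subtlety is recognizing that the defining property of a $\chi$-ring is exactly what makes the reverse inclusion go through, so that $\chi_{E^c}$ is available as the canonical witness for each $E \in \mathcal{F}$. One minor point worth noting is the degenerate case $E = X$ (when $\chi_{E^c} = 0$ becomes a non-unit in a vacuous way) and the fact that if $\mathcal{F}$ is proper then $\emptyset \notin \mathcal{F}$ so no unit arises in $J$; these cases cause no difficulty but should be mentioned for completeness.
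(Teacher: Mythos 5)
Your proof is correct and follows essentially the same route as the paper's: the inclusion $\mathcal{F}\subseteq\mathcal{Z}_S[J]$ is argued identically, with $\chi_{E^c}$ as the canonical witness supplied by the $\chi$-ring hypothesis. The only (cosmetic) difference is in the forward inclusion, where you use the direct containment $\mathcal{Z}_S(f)\subseteq\mathcal{Z}_\mathcal{M}(f)$ coming from $S(X,\mathcal{A})\subseteq\mathcal{M}(X,\mathcal{A})$, while the paper routes through the description of $\mathcal{Z}_\mathcal{M}(f)$ as the family of measurable supersets of $Z(f)$; both are valid and yield the same conclusion.
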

\begin{proof}
	It follows from the Theorem $4.3(ii)$ in \cite{ABS2020} that $\mathcal{Z}_\mathcal{M}^{-1}(\mathcal{F})$ is an ideal in $\mathcal{M}(X,\mathcal{A})$ and hence $\mathcal{Z}_\mathcal{M}^{-1}(\mathcal{F}) \cap S(
	X,\mathcal{A})$ is an ideal in $S(X,\mathcal{A})$. For any $E\in \mathcal{F}$, $\chi_{E^c}$ is invertible on $E^c$ in the ring $S(
	X,\mathcal{A})$. Also since $Z(\chi_{E^c})=E$, it follows that $E\in \mathcal{Z}_\mathcal{M}(\chi_{E^c})$, consequently $Z(\chi_{E^c})\in \mathcal{F}$ and hence  $\chi_{E^c} \in \mathcal{Z}_\mathcal{M}^{-1}(\mathcal{F})$. Thus $\chi_{E^c}\in \mathcal{Z}_\mathcal{M}^{-1}(\mathcal{F}) \cap S(X,\mathcal{A})$ and therefore $E\in \mathcal{Z}_S(\chi_{E^c})$. So $E\in \mathcal{Z}_S[\mathcal{Z}_\mathcal{M}^{-1}(\mathcal{F})\cap S(X,\mathcal{A})]$. Thus $\mathcal{F} \subseteq \mathcal{Z}_S[\mathcal{Z}_\mathcal{M}^{-1}(\mathcal{F}) \cap S(X,\mathcal{A})]$.\newline To prove the reverse implication relation choose $f\in \mathcal{Z}_\mathcal{M}^{-1}(\mathcal{F}) \cap S(X,\mathcal{A})$, then any $\mathcal{A}$-measurable set which contains $Z(f)$ is a member of $\mathcal{F}$. Therefore if $E\in \mathcal{Z}_S(f)$, then $Z(f) \subseteq E$ and hence $E\in \mathcal{F}$. Thus $\mathcal{Z}_S(f)\subseteq \mathcal{F}$ and consequently $\mathcal{Z}_S[\mathcal{Z}_\mathcal{M}(\mathcal{F})\cap S(X,\mathcal{A})]\subseteq \mathcal{F}$.
\end{proof}
\begin{remark} On using the notations $\mathcal{I}_S$ and $\mathcal{A}_S$ respectively for the family of all ideals in the ring $S(X,\mathcal{A})$ and the collection of all $\mathcal{A}$-filters on $X$, the last theorem together with the observations that precede this theorem can lead to the following fact: The map $\mathcal{Z}_S:\begin{cases}
		\mathcal{I}_S\rightarrow \mathcal{A}_X\\
		I\rightarrow \mathcal{Z}_S[I]\\
	\end{cases}$ is a surjection onto $\mathcal{A}_X$. We shall soon observe that this map carries maximal ideals in $S(
X,\mathcal{A})$ onto $\mathcal{A}$-ultrafilters on $X$. Indeed the first among the next two theorems says that we can speak something more.\end{remark} \begin{theorem}
\label{pseudoprime}	 Let $P$ be a pseudo prime ideal in $S(X,\mathcal{A})$, in the sense that whenever $f\cdot g=0$, $f,g\in S(X,\mathcal{A})$, then either $f\in P$ or $g\in P$. Then $\mathcal{Z}_S[P]$ ia an $\mathcal{A}$-ultrafilter on $X$.\end{theorem}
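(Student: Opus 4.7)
The plan is to exploit the characteristic functions, which by the $\chi$-ring hypothesis are abundantly available in $S(X,\mathcal{A})$. Since it is already noted in the discussion preceding the theorem that $\mathcal{Z}_S[I]$ is an $\mathcal{A}$-filter for every proper ideal $I$, it suffices to verify the maximality: I will show that for every $E\in\mathcal{A}$, either $E\in\mathcal{Z}_S[P]$ or $E^c\in\mathcal{Z}_S[P]$, which is a clean characterization of an $\mathcal{A}$-ultrafilter (any strictly larger filter would have to contain both some $E$ and its complement, which is impossible).

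Fix an arbitrary $E\in\mathcal{A}$. The $\chi$-ring axiom gives $\chi_E,\chi_{E^c}\in S(X,\mathcal{A})$, and pointwise they satisfy $\chi_E\cdot\chi_{E^c}=0$. Pseudo primality of $P$ then forces $\chi_E\in P$ or $\chi_{E^c}\in P$.

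The core calculation is to read off $\mathcal{Z}_S(\chi_E)$: since $\chi_E(x)\neq 0$ precisely on $E$, the function $\chi_E$ is $F^c$-regular (witnessed by $\chi_E$ itself, because $\chi_E\cdot\chi_E=\chi_E=1$ on $E$) exactly when $F^c\subseteq E$, i.e. $E^c\subseteq F$. In particular $E^c\in\mathcal{Z}_S(\chi_E)$. So if $\chi_E\in P$ then $E^c\in\mathcal{Z}_S[P]$; symmetrically if $\chi_{E^c}\in P$ then $E\in\mathcal{Z}_S[P]$. In either case one of $E,E^c$ lies in $\mathcal{Z}_S[P]$, completing the proof.

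The step most likely to require care is the very last bit: confirming that ``$E\in\mathcal{U}$ or $E^c\in\mathcal{U}$ for all $E\in\mathcal{A}$'' really is equivalent to $\mathcal{U}$ being an $\mathcal{A}$-ultrafilter. That equivalence is standard but deserves one line, since in the measure-theoretic setting one has to restrict attention to $\mathcal{A}$-measurable sets, which is precisely what we are doing throughout. Beyond this, the argument is essentially just the observation that characteristic functions of complementary sets multiply to zero, and their $\mathcal{Z}_S$-data is transparent.
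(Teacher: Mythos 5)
Your proof is correct and takes essentially the same route as the paper's: both arguments hinge on the observation that $\chi_E\cdot\chi_{E^c}=0$ forces one of the two characteristic functions into $P$ by pseudo primality, together with the transparent computation $E^c\in\mathcal{Z}_S(\chi_E)$. The only difference is cosmetic --- the paper first extends $\mathcal{Z}_S[P]$ to an ultrafilter $\mathcal{U}$ and shows $\mathcal{U}\subseteq\mathcal{Z}_S[P]$ by contradiction, whereas you verify maximality directly via the dichotomy characterization (for every $E\in\mathcal{A}$, either $E$ or $E^c$ belongs to the filter), which is marginally cleaner.
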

 \begin{proof}
 	Since each $\mathcal{A}$-filter on $X$, extends to an $\mathcal{A}$-ultrafilter (by the typical Zorn`s lemma argument), there exists an $\mathcal{A}$-ultrafilter $\mathcal{U}$  on $X$ such that $\mathcal{Z}_S[P] \subset \mathcal{U}$. It is enough to show that $\mathcal{Z}_S[P]=\mathcal{U}$. So let $E\in \mathcal{U}$. Then $\chi_E \cdot \chi_{(X-E)}=0$, implies due to the pseudo primeness of $P$ that $\chi_E\in P$ or $\chi_{(X-E)}\in P$. We assert that $\chi_{(X-E)}\in P$. If possible, let $\chi_{(X-E)}\notin P$, then $\chi_E\in P$. Since $Z(\chi_{E})=X\setminus E$, this implies that $X\setminus E \in \mathcal{Z}_S(\chi_E)$ and consequently $X\setminus E \in \mathcal{Z}_S[P]$ and hence $X\setminus E \in \mathcal{U}$. Since $E$ is already a member of $\mathcal{U}$ and $E\cap (X\setminus E)= \emptyset$, we arrive at a contradiction. Hence $\chi_{(X-E)}\in P$. Consequently $E\in \mathcal{Z}(\chi_{(X-E)})\subseteq \mathcal{Z}_S[P]$ and hence $\mathcal{U} \subseteq \mathcal{Z}_S[P]$.
 \end{proof}
\begin{remark}
	\label{maxi to ultra}
	If $M$ is a maximal ideal in $S(X,\mathcal{A})$ containing a pseudo prime ideal $P$, then $\mathcal{Z}_S[P]=\mathcal{Z}_S[M]=$an $ \mathcal{A}$-ultrafilter on $X$.
\end{remark}
\begin{theorem}\label{preimage of ultra}
	Let $\mathcal{U}$ be an $\mathcal{A}$-ultrafilter on $X$. Then there exists a maximal ideal $M$ in $S(X,\mathcal{A})$ such that $\mathcal{Z}_S[M]=\mathcal{U}$.
\end{theorem}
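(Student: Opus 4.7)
The plan is to exploit Theorem~\ref{ preimage of filter}, which already supplies an ideal whose $\mathcal{Z}_S$-image is $\mathcal{U}$, and then enlarge it to a maximal ideal while controlling its image. Concretely, I would set
\[
I \;:=\; \mathcal{Z}_\mathcal{M}^{-1}(\mathcal{U}) \cap S(X,\mathcal{A}).
\]
By Theorem~\ref{ preimage of filter} we have $\mathcal{Z}_S[I] = \mathcal{U}$, so the strategy reduces to fitting $I$ inside some maximal ideal $M$ of $S(X,\mathcal{A})$ and promoting the inclusion $\mathcal{U} \subseteq \mathcal{Z}_S[M]$ to an equality.

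The first step is to check that $I$ is proper. If $f \in S(X,\mathcal{A})$ were a unit, say $fg = 1$ on $X$, then $f$ would be $E^c$-regular for every $E \in \mathcal{A}$; in particular $\emptyset \in \mathcal{Z}_\mathcal{M}(f)$. Since $\mathcal{U}$ is a proper filter, $\emptyset \notin \mathcal{U}$, so $\mathcal{Z}_\mathcal{M}(f) \not\subseteq \mathcal{U}$ and hence $f \notin I$. In particular $1 \notin I$, so $I$ is a proper ideal of $S(X,\mathcal{A})$ and, by Zorn's lemma, extends to some maximal ideal $M$ of $S(X,\mathcal{A})$.

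The concluding step is to identify $\mathcal{Z}_S[M]$ with $\mathcal{U}$. From $I \subseteq M$ we immediately get $\mathcal{U} = \mathcal{Z}_S[I] \subseteq \mathcal{Z}_S[M]$. Now $M$, being maximal in the commutative ring $S(X,\mathcal{A})$, is prime and therefore pseudo prime in the sense of Theorem~\ref{pseudoprime}; that theorem then tells us $\mathcal{Z}_S[M]$ is itself an $\mathcal{A}$-ultrafilter on $X$. Two $\mathcal{A}$-ultrafilters related by inclusion must coincide, whence $\mathcal{U} = \mathcal{Z}_S[M]$, as required.

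The only delicate point, and the place where the argument could trip a careless reader, is the properness check for $I$: one must notice that $\mathcal{Z}_\mathcal{M}(f)$ collapses to all of $\mathcal{A}$ (and in particular contains $\emptyset$) exactly when $f$ is a global unit, and that this is enough to rule $f$ out of $\mathcal{Z}_\mathcal{M}^{-1}(\mathcal{U})$ for every proper filter $\mathcal{U}$. Once that observation is in place, the proof is a clean concatenation of Theorem~\ref{ preimage of filter}, Zorn's lemma, and Theorem~\ref{pseudoprime}, with no further computation required.
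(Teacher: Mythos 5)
Your proof is correct and follows essentially the same route as the paper: both take $I=\mathcal{Z}_\mathcal{M}^{-1}[\mathcal{U}]\cap S(X,\mathcal{A})$, use Theorem~\ref{ preimage of filter} (respectively the primeness of $I$) to get $\mathcal{Z}_S[I]=\mathcal{U}$, extend $I$ to a maximal ideal $M$, and conclude $\mathcal{Z}_S[M]=\mathcal{U}$ because $\mathcal{Z}_S[M]$ is an $\mathcal{A}$-ultrafilter containing the ultrafilter $\mathcal{U}$. The only cosmetic difference is that the paper obtains properness of $I$ by quoting that $\mathcal{Z}_\mathcal{M}^{-1}[\mathcal{U}]$ is a maximal (hence proper) ideal of $\mathcal{M}(X,\mathcal{A})$, whereas you verify $1\notin I$ directly via $\emptyset\in\mathcal{Z}_\mathcal{M}(1)\setminus\mathcal{U}$.
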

\begin{proof}
	It follows from Theorem $4.5(ii)$ in \cite{ABS2020} that $\mathcal{Z}_\mathcal{M}^{-1}[\mathcal{U}]$ is a maximal ideal in the ring $\mathcal{M}(X,\mathcal{A})$ and thus $\mathcal{Z}_\mathcal{M}^{-1}[\mathcal{U}]\cap S(X,\mathcal{A})$ is a prime ideal in the ring $S(X,\mathcal{A})$. Hence we get from the Theorem  \ref{pseudoprime} that $\mathcal{Z}_S[\mathcal{Z}_\mathcal{M}^{-1}[\mathcal{U}] \cap S(X,\mathcal{A})]=\mathcal{U}$. Now $\mathcal{Z}_\mathcal{M}^{-1}[\mathcal{U}] \cap S(X,\mathcal{A})$ can be extended to a maximal ideal  $M$ in $S(X,\mathcal{A})$. It is clear that $\mathcal{Z}_S[M]=\mathcal{U}$.
\end{proof}
\begin{remark}
	 The map $\mathcal{Z}_S:\begin{cases}
	 	\mathcal{M}_S\rightarrow \mathcal{U}(X)\\
	 	M\rightarrow \mathcal{Z}_S[M]\\
	 \end{cases}$ takes the maximal ideals in $S(X,\mathcal{A})$ onto the $\mathcal{A}$-ultrafilters on $X$. Here $\mathcal{M}_S$ is the collection of all maximal ideals in $S(X,\mathcal{A})$ and $\mathcal{U}(X)$ stands for the set of all $\mathcal{A}$-ultrafilters on $X$. The map $\mathcal{Z}_S:\mathcal{M}_S\rightarrow \mathcal{U}(X)$ need not be one-to-one. This can be easily realized by looking at the example \ref{counter example}. The next theorem decides when does the $\mathcal{Z}_S:\mathcal{M}_S\rightarrow \mathcal{U}(X)$ become one-to-one.
 
\end{remark}
\begin{theorem}\label{bijection}
	The map $\mathcal{Z}_S:\begin{cases}
		\mathcal{M}_S\rightarrow \mathcal{U}(X)\\
		M\rightarrow \mathcal{Z}_S[M]\\
	\end{cases}$ is one-to-one if and only if $S(X,\mathcal{A})$ is a Gelfand ring.
\end{theorem}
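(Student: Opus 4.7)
The plan is to treat the two implications separately, leveraging the machinery already developed in this section. The necessity direction follows almost immediately from Theorem \ref{pseudoprime} and Remark \ref{maxi to ultra}: take any prime ideal $P$ in $S(X,\mathcal{A})$ (it is \emph{a fortiori} pseudo prime) and any two maximal ideals $M_1, M_2$ containing $P$. Remark \ref{maxi to ultra} gives $\mathcal{Z}_S[M_i] = \mathcal{Z}_S[P]$ for $i = 1, 2$, hence $\mathcal{Z}_S[M_1] = \mathcal{Z}_S[M_2]$, and the assumed injectivity forces $M_1 = M_2$. So each prime ideal of $S(X,\mathcal{A})$ is contained in a unique maximal ideal, i.e., $S(X,\mathcal{A})$ is Gelfand.

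For the sufficiency, I would suppose $\mathcal{Z}_S[M_1] = \mathcal{Z}_S[M_2] = \mathcal{U}$ for two maximal ideals $M_1, M_2$ and manufacture a common prime ideal lying inside both. Following the proof of Theorem \ref{preimage of ultra}, set $P = \mathcal{Z}_\mathcal{M}^{-1}[\mathcal{U}] \cap S(X,\mathcal{A})$. Since $\mathcal{Z}_\mathcal{M}^{-1}[\mathcal{U}]$ is a maximal, hence prime, ideal of $\mathcal{M}(X,\mathcal{A})$, its contraction $P$ along the inclusion $S(X,\mathcal{A}) \hookrightarrow \mathcal{M}(X,\mathcal{A})$ is prime in $S(X,\mathcal{A})$. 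If one can show $P \subseteq M$ for every maximal $M$ with $\mathcal{Z}_S[M] = \mathcal{U}$, then applying this to $M_1$ and $M_2$ and invoking the Gelfand hypothesis will deliver $M_1 = M_2$.

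The crux is this inclusion. Fix such an $M$ and pick $f \in P$, so $\mathcal{Z}_\mathcal{M}(f) \subseteq \mathcal{U}$; in particular $Z(f) \in \mathcal{U}$ (since $f$ is manifestly invertible on $X \setminus Z(f)$ inside $\mathcal{M}(X,\mathcal{A})$). Assume for contradiction $f \notin M$. Maximality of $M$ yields $m \in M$ and $h \in S(X,\mathcal{A})$ with $m + fh = 1$, whence $m \equiv 1$ on $Z(f)$. Taking the local inverse to be $\chi_X \in S(X,\mathcal{A})$ (a free consequence of the $\chi$-ring hypothesis), this witnesses that $m$ is $Z(f)$-regular, so $X \setminus Z(f) \in \mathcal{Z}_S(m) \subseteq \mathcal{Z}_S[M] = \mathcal{U}$. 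But then $Z(f)$ and $X \setminus Z(f)$ are disjoint members of the filter $\mathcal{U}$, absurd. The main obstacle is spotting this $\chi_X$-trick, which is precisely what the $\chi$-ring structure is tailor-made to supply; once it is in hand, the rest of the argument is routine bookkeeping.
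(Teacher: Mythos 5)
Your proposal is correct and follows essentially the same route as the paper: the necessity direction is identical (prime ideal, Remark \ref{maxi to ultra}, injectivity), and for sufficiency you build the same prime ideal $P=\mathcal{Z}_\mathcal{M}^{-1}[\mathcal{U}]\cap S(X,\mathcal{A})$ and reduce to showing $P\subseteq M\cap N$. The only (valid) divergence is in establishing $P\subseteq M$: the paper argues directly, using $Z(f)\in\mathcal{U}=\mathcal{Z}_S[M]$ to produce $g\in M$ with $gh=1$ off $Z(f)$ and writing $f=ghf\in M$, whereas you argue by contradiction from $m+fh=1$ via the $\chi_X$ witness; both work, the direct version merely avoiding any appeal to the maximality of $M$.
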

	\begin{proof}
		First assume that $\mathcal{Z}_S$ is a one-to-one map. Let $P$ be a prime ideal in $S(X,\mathcal{A})$ and $M$ and $N$ are two maximal ideals in the same ring such that $P\subseteq M$ and $P \subseteq N$. It follows from the Remark \ref{maxi to ultra} that $\mathcal{Z}_S[P]=\mathcal{Z}_S[M]=\mathcal{Z}_S[N]$. Since $\mathcal{Z}_S$ is assumed to be one-to-one, this implies that $M=N$. Thus $S(X,\mathcal{A})$ is a Gelfand ring.
	\newline To prove the  other part of the theorem let $S(X,\mathcal{A})$ be a Gelfand ring. Let $M$ and $N$ be two maximal ideals in the ring $S(X,\mathcal{A})$ such that $\mathcal{Z}_S[M]=\mathcal{Z}_S[N]$. We shall show that $M=N$. It follows from the Theorem \ref{pseudoprime}	that $\mathcal{Z}_S[M]=\mathcal{Z}_S[N]=\mathcal{U}$, an $\mathcal{A}$-ultrafilter on $X$. We observe in course of previous Theorem \ref{pseudoprime} that $\mathcal{Z}_\mathcal{M}^{-1}[\mathcal{U}]\cap S(X,\mathcal{A})$ is a prime ideal in $S(X,\mathcal{A})$. We shall prove that $  \mathcal{Z}_\mathcal{M}^{-1}[\mathcal{U}] \cap S(X,\mathcal{A}) \subseteq M\cap N$.........(1); from this it will follow, because $S(X,\mathcal{A})$ is a Gelfand ring that $M=N$. Proof of the implication relation (1): let $f\in \mathcal{Z}_\mathcal{M}^{-1}[\mathcal{U}]\cap S(X,\mathcal{A})$. Then $\mathcal{Z}_\mathcal{M}(f) \subseteq \mathcal{Z}_S[M]$, consequently there exists a $g\in M$ such that $\mathcal{Z}_\mathcal{M}(f)\subseteq \mathcal{Z}_S(g)$. As observed prior to the statement of Theorem \ref{ preimage of filter} that $\mathcal{Z}_\mathcal{M}(f)$ contains precisely those $\mathcal{A}$-measurable sets which contains $Z(f)$, from this it follows that $Z(f)\in \mathcal{Z}_S(g)$. This means that $g $ is $X\setminus Z(f)$ invertible in the ring $S(X,\mathcal{A})$. Hence there exists $h\in S(X,\mathcal{A})$ such that $(gh)|_{(X \setminus Z(f))}=1$. Consequently $f=ghf$ on the whole of $X$. Since $g\in M$ and $fh\in S(X,\mathcal{A})$, it follows that $f\in M$. It can be proved analogously that $f\in N$. Thus $f\in M\cap N $. Therefore the implication relation (1) above is proved.
\end{proof} 
\begin{corollary}\label{intermediate corollary}
	Each intermediate ring of measurable functions lying between $\mathcal{M}^*(X,\mathcal{A})$ and $\mathcal{M}(X,\mathcal{A})$ is a Gelfand ring.
	\\ This follows from Theorem $4.6$ in \cite{ABS2020} and the Theorem \ref{bijection} above.\\
	
	 In the following we construct a $\chi$-ring which is not Gelfand.
\end{corollary}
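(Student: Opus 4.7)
The plan is to combine the bijection criterion established in Theorem \ref{bijection} with an existing result from \cite{ABS2020} that handles the case of intermediate rings. Let $N(X,\mathcal{A})$ be any intermediate ring, i.e., $\mathcal{M}^*(X,\mathcal{A}) \subseteq N(X,\mathcal{A}) \subseteq \mathcal{M}(X,\mathcal{A})$. First I would observe that $N(X,\mathcal{A})$ is automatically a $\chi$-ring, because it contains $\mathcal{M}^*(X,\mathcal{A})$, which in turn contains the characteristic function $\chi_A$ of every $A\in\mathcal{A}$. Hence every definition and every theorem developed in this section applies verbatim to $N(X,\mathcal{A})$; in particular the map $\mathcal{Z}_N:\mathcal{M}_N\to\mathcal{U}(X)$, $M\mapsto\mathcal{Z}_N[M]$, is well defined and, by Theorem \ref{preimage of ultra}, surjective.

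Next I would invoke Theorem $4.6$ of \cite{ABS2020}, which states that for an intermediate ring of measurable functions the map $\mathcal{Z}_N:\mathcal{M}_N\to\mathcal{U}(X)$ is a bijection (indeed a homeomorphism, once the hull-kernel topology on $\mathcal{M}_N$ and the Stone topology on $\mathcal{U}(X)$ are installed). The only feature of that cited result that matters for the corollary is the injectivity of $\mathcal{Z}_N$.

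Finally, injectivity of $\mathcal{Z}_N$ is precisely the hypothesis that drives the forward direction of Theorem \ref{bijection}. Applying Theorem \ref{bijection} with the $\chi$-ring $S(X,\mathcal{A})$ taken to be $N(X,\mathcal{A})$ yields at once that $N(X,\mathcal{A})$ is a Gelfand ring, as desired. I do not expect any real obstacle in this argument, since the nontrivial content has already been absorbed into Theorem \ref{bijection} (for the equivalence with Gelfandness) and into the cited bijection result of \cite{ABS2020} (for verifying the injectivity hypothesis in the intermediate case); the role of the corollary is merely to recognize that intermediate rings satisfy the injectivity criterion for free.
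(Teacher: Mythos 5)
Your proposal is correct and follows exactly the paper's intended route: note that an intermediate ring is a $\chi$-ring, extract injectivity of the map $M\mapsto\mathcal{Z}_S[M]$ from Theorem $4.6$ of \cite{ABS2020}, and feed that into Theorem \ref{bijection} to conclude Gelfandness. The paper gives only the one-line citation, so your write-up simply makes explicit the same two ingredients it invokes.
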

\begin{example}\label{counter example}
	Consider the $\chi$-ring $\mathcal{M}_F(\mathbb{R},P(\mathbb{R}))=\{f:\mathbb{R}\rightarrow \mathbb{R}:f $ is a function with $f(\mathbb{R}) \text{ a finite subset of }\mathbb{Z}\}$. Let $P=\{f\in\mathcal{M}_F(\mathbb{R},P(\mathbb{R})):f(2)=0\}$. Then it is easy to check that $P$ is a prime ideal in the ring under consideration. Clearly $P \subset <P,2>\equiv$ the ideal in this ring generated by $P$ and the constant function $2$ and also $P\subset <P,3>$. It is clear that $<P,2> \neq <P,3>$. We shall check that $<P,2>$ is a maximal ideal in $\mathcal{M}_F(\mathbb{R},P(\mathbb{R}))$. (The proof for the maximality of $<P,3>$ as an ideal is analogous). For that purpose choose $f\in \mathcal{M}_F(\mathbb{R},P(\mathbb{R})) \setminus <P,2>$. We assert that $f(2)$ is an odd integer, for if $f(2)=2m$ for some $m\in \mathbb{Z}$, then $f-2m$ vanishes at the point $2$ and therefore $f-2m\in P$, which implies that $f\in <P,2>$, a contradiction. So we can write $f(2)=2n+1$ for some $n\in \mathbb{Z}$. Now we define the three functions $k,$ $l,$ $h\in \mathcal{M}_F(\mathbb{R},P(\mathbb{R}))$ as follows: $k=\chi_{\{2\}}$, $l(x)=\begin{cases}
		2 \text{ if } x\neq 2 \\
		-n \text{ if } x=2
	\end{cases}$ and $h(x)= \begin{cases}
0 \text{ if } x=2\\
-3 \text{ if } x\neq 2
\end{cases}$. Then it is easy to check that $1=k\cdot f+h+2l$. We note that $h\in P$ and therefore $1\in <f>+<P,2>.$ Since $f$ is chosen arbitrarily outside  $<P,2>$ in the ring $\mathcal{M}_F(\mathbb{R},P(\mathbb{R}))$, it follows that $<P,2> $ is a maximal ideal in this ring.
\end{example}

\section{ Structure space of the $\chi$-ring $S(X,\mathcal{A})$ vis-a-vis the space of ultrafilters of $\mathcal{A}$-measurable sets }

	For any $f\in S(X,\mathcal{A})$, we write $\mathcal{M}_f=\{M\in \mathcal{M}_S:f\in M\}$, here we recall that $\mathcal{M}_S$ stands for the set of all maximal ideals in $S(X,\mathcal{A})$. Then like any commutative ring with unity, the family $\{M_f:f\in S(X,\mathcal{A})\}$ is a base for closed sets of the hull-kernel topology on $\mathcal{M}_S$ [ see $7M$ in \cite{GJ}]. We wish to denote this topology by the symbol $\tau_S$. It follows 
	from [ $7M$ \cite{GJ}] 
	that $(\mathcal{M}_S,\tau_S)$ is a compact $T_1$ topological space. On the other hand since $\mathcal{M}_{\chi_{X}}=\emptyset$ and for any two $\mathcal{A}$-measurable sets $E,F$, $\mathcal{M}_{\chi_{E}}\bigcup \mathcal{M}_{\chi_{F}}=\mathcal{M}_{\chi_{(E\cap F)}}$, as can be easily verified, it follows that the family $\{\mathcal{M}_{\chi_{E}}:E \in \mathcal{A}\}$ is a base for the closed sets of some topology $\tau_\chi$ on $\mathcal{M}_S$. It is clear that $\tau_\chi \subseteq \tau_S$. \newline On writing for any $\mathcal{A}$-measurable set $E$ in $X$, $S_E=\{\mathcal{U}\in \mathcal{U}_X:E\in \mathcal{U}\}$, where we recall that $\mathcal{U}_X$ is the aggregate of all ultrafilters of $\mathcal{A}$-measurable sets in $X$, it is well known that the family $\{S_E:E\in \mathcal{A}\}$ is a base for the closed sets for the Stone-topology $\tau$ on $\mathcal{U}_X$. See the Theorems $2.9, 2.10 $ in \cite{ABS2020}, for a detailed proof of a few results related to this Stone topology. It follows from Theorem $2.10$ \cite{ABS2020} that $(\mathcal{U}_X,\tau)$ is a compact Hausdorff zero-dimensional space. In this section we shall relate these three topologies.\begin{theorem}\label{Z chi continous}
		 The map $\mathcal{Z}_S^\chi:\begin{cases}
		 	(\mathcal{M}_S,\tau_\chi)\rightarrow (\mathcal{U}_X,\tau)\\
		 	M\rightarrow \mathcal{Z}_S[M]\\
		 \end{cases}$ is continuous.
	\end{theorem}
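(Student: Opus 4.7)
The plan is to verify continuity by checking that the preimage of each basic closed set in $(\mathcal{U}_X,\tau)$ is closed in $(\mathcal{M}_S,\tau_\chi)$. Since $\{S_E : E \in \mathcal{A}\}$ is a base for the closed sets of $\tau$ and $\{\mathcal{M}_{\chi_F} : F\in\mathcal{A}\}$ is a base for the closed sets of $\tau_\chi$, it will suffice to show that for every $E \in \mathcal{A}$,
\[
(\mathcal{Z}_S^\chi)^{-1}(S_E) \;=\; \mathcal{M}_{\chi_{E^c}}.
\]

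I would unwind the definitions: $M \in (\mathcal{Z}_S^\chi)^{-1}(S_E)$ means $E \in \mathcal{Z}_S[M]$, i.e.\ there exists $f \in M$ and $g \in S(X,\mathcal{A})$ with $(fg)|_{E^c} = 1$. For the inclusion $\mathcal{M}_{\chi_{E^c}} \subseteq (\mathcal{Z}_S^\chi)^{-1}(S_E)$, I would simply observe that $\chi_{E^c}$ is $E^c$-regular (witnessed by itself, since $\chi_{E^c}\cdot\chi_{E^c}|_{E^c}=1$), so if $\chi_{E^c} \in M$ then $E \in \mathcal{Z}_S(\chi_{E^c}) \subseteq \mathcal{Z}_S[M]$. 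For the reverse inclusion, given $f \in M$ and $g \in S(X,\mathcal{A})$ with $(fg)|_{E^c} = 1$, I would multiply through by $\chi_{E^c}$: on $E^c$ the product $f g \chi_{E^c}$ agrees with $\chi_{E^c}$ (both equal $1$), and on $E$ both sides vanish. Hence $\chi_{E^c} = f\cdot(g\chi_{E^c})$ with $f \in M$ and $g\chi_{E^c} \in S(X,\mathcal{A})$, so $\chi_{E^c} \in M$, giving $M \in \mathcal{M}_{\chi_{E^c}}$.

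Since $\mathcal{M}_{\chi_{E^c}}$ is by definition a basic closed set in $\tau_\chi$, this completes the continuity check. The only real subtlety is the second inclusion, where one must convert the local invertibility data $(fg)|_{E^c}=1$ into an actual algebraic identity inside the ring; the characteristic function $\chi_{E^c}$ is exactly the cutoff that turns this pointwise relation into the equation $\chi_{E^c} = f\cdot(g\chi_{E^c})$, and the fact that $S(X,\mathcal{A})$ is a $\chi$-ring (so that $\chi_{E^c}\in S(X,\mathcal{A})$) is what makes the argument work. No other ingredient is needed.
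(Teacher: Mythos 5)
Your proof is correct and follows essentially the same route as the paper's: both establish the identity $(\mathcal{Z}_S^\chi)^{-1}(S_E)=\mathcal{M}_{\chi_{E^c}}$, using that $\chi_{E^c}$ is its own witness for $E^c$-regularity in one direction and the cutoff identity $\chi_{E^c}=f\cdot(g\chi_{E^c})$ in the other. No discrepancies to report.
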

\begin{proof}
	 We shall show that $\mathcal{Z}_S^\chi$ returns back a basic closed set in the range space to a basic closed set in the domain space and that settles the continuity of $\mathcal{Z}_S^\chi$. Indeed we shall check that for any $E \in \mathcal{A}$, $(\mathcal{Z}_S^{\chi})^{-1}[S_E]=\mathcal{M}_{\chi_{(X\setminus E)}}$. Towards proving this assertion let $M\in \mathcal{M}_{\chi_{(X \setminus E)}}$, then $\chi_{(X \setminus E)}\in M$. Since $\chi_{(X \setminus E)}$ is invertible on $X\setminus E$, it follows that $E\in \mathcal{Z}_S(\chi_{(X \setminus E)})$ and hence $E\in \mathcal{Z}_S[M]$. This further implies that $\mathcal{Z}_S^\chi[M]\in S_E$ and hence $M\in ({\mathcal{Z}_S^\chi})^{-1} [S_E]$. Thus $\mathcal{M}_{\chi_{(X \setminus E)}} \subseteq ({\mathcal{Z}_S^\chi})^{-1}[S_E$]. Towards proving the reverse inclusion relation. Let $M \in ({\mathcal{Z}_S^\chi})^{-1}[S_E]$. Then $\mathcal{Z}_S^\chi[M]\in S_E$ and therefore $E\in \mathcal{Z}_S[M]$. Consequently $E\in \mathcal{Z}_S(g)$ for some $g\in M$ and surely $g$ is invertible on $X\setminus E$. This implies that, there exist $h\in S(X,\mathcal{A})$ such that $gh|_{(X \setminus E)}=1$. It follows that $\chi_{(X \setminus E)}=gh\chi_{(X\setminus E)}$. Hence $\chi_{(X \setminus E)}\in M$, in other words $M\in \mathcal{M}_{\chi_{(X\setminus E)}}$. Thus $({\mathcal{Z}_S^\chi})^{-1}[S_E] \subseteq \mathcal{M}_{\chi_{(X\setminus E)}}$. Hence $({\mathcal{Z}_S^\chi})^{-1}[S_E]=\mathcal{M}_{\chi_{(X\setminus E)}}$.
\end{proof}
\begin{corollary}\label{Z continuous}
	The map $\mathcal{Z}_S:(\mathcal{M}_S,\tau_S)\rightarrow (\mathcal{U}_X,\tau)$ is a continuous map.

\end{corollary}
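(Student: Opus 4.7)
The plan is to derive the corollary directly from Theorem \ref{Z chi continous} by exploiting the inclusion $\tau_\chi \subseteq \tau_S$ that has already been established earlier in this section. Both maps $\mathcal{Z}_S$ and $\mathcal{Z}_S^\chi$ are literally the same set-function $M \mapsto \mathcal{Z}_S[M]$ on $\mathcal{M}_S$; only the topology on the domain is replaced by a finer one. Since any continuous map from a space remains continuous when the domain topology is refined, the result should follow immediately.

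The key step is the following observation: the identity function $\mathrm{id}:(\mathcal{M}_S,\tau_S)\rightarrow(\mathcal{M}_S,\tau_\chi)$ is continuous, because $\tau_\chi \subseteq \tau_S$ means each $\tau_\chi$-basic closed set $\mathcal{M}_{\chi_E}$ ($E\in\mathcal{A}$) is already closed in $\tau_S$ (indeed, it is of the form $\mathcal{M}_f$ with $f=\chi_E\in S(X,\mathcal{A})$, which is a basic closed set in the hull-kernel topology). One then writes
\[
\mathcal{Z}_S \;=\; \mathcal{Z}_S^{\chi}\circ \mathrm{id}:(\mathcal{M}_S,\tau_S)\longrightarrow (\mathcal{M}_S,\tau_\chi)\longrightarrow (\mathcal{U}_X,\tau),
\]
so continuity of $\mathcal{Z}_S$ is the composition of a trivially continuous refinement map with the continuity already proved in Theorem \ref{Z chi continous}.

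If one prefers an argument that does not quote the composition, the alternative plan is to repeat the preimage computation from the proof of Theorem \ref{Z chi continous} verbatim: for every basic closed set $S_E$ in $(\mathcal{U}_X,\tau)$ one shows $\mathcal{Z}_S^{-1}[S_E]=\mathcal{M}_{\chi_{(X\setminus E)}}$, which is a basic closed set of $\tau_\chi$ and hence a closed set of the finer topology $\tau_S$. Either way, I do not foresee any genuine obstacle here; the only point worth flagging is the easy verification that the $\tau_\chi$-basic closed sets $\mathcal{M}_{\chi_E}$ really are $\tau_S$-closed, which is immediate from the definition of the hull-kernel topology together with the fact that $\chi_E\in S(X,\mathcal{A})$ for every $E\in\mathcal{A}$ (built into the definition of a $\chi$-ring).
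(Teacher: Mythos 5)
Your proposal is correct and matches the paper's intended reasoning: the corollary is stated as an immediate consequence of Theorem \ref{Z chi continous} together with the previously noted inclusion $\tau_\chi \subseteq \tau_S$, which is exactly the refinement argument you give. No further comment is needed.
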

 We define an equivalence relation on the set $\mathcal{M}_S $ by the following rule: for $M,N\in \mathcal{M}_S$ we write $M \sim N$ if and only if $\mathcal{Z}_S[M]= \mathcal{Z}_S[N]$. Let $\Pi:\mathcal{M}_S \rightarrow \mathcal{M}/\sim $ be the natural map defined as $\Pi(M)=[M]$, here $[M]$ is the equivalence class in the quotient set $\mathcal{M}_S/\sim$ which contains the element $M\in \mathcal{M}_S$. Endow $\mathcal{M}_S/\sim $, the quotient topology $\tau_q$ on it, induced by the map $\Pi$, thus $\tau_q$
is the largest topology on $\mathcal{M}_S/\sim$, which makes $\Pi$, a continuous map. Define the map $\mathcal{Z}_S^0:\mathcal{M}_S/\sim \rightarrow \mathcal{U}_X $ by the following rule: $\mathcal{Z}_S^0([M])=\mathcal{Z}_S[M]$. Thus $\mathcal{Z}_S^0  \circ \Pi= \mathcal{Z}_S$, in other words, the categorical diagram:
\begin{center}
	\begin{tikzpicture}
		
		\node at (0,0) {$(\mathcal{M}_S,\tau_S)$};
		
		\node at (1.75,0.2) {$\mathcal{Z}_S$};
		\draw[->] (0.75,0) -- (2.6,0);
		\node at (3.2,0) {$(\mathcal{U}_X,\tau)$};
		\draw[->] (0,-0.2) -- (0, -1.5);
		\node at (-0.2,-0.75) {$\Pi$};
		\node at (0,-1.75) {$(\mathcal{M}_S/\sim,\tau_q)$};
		\draw[->] (0.1,-1.5) -- (3.2,-0.25);
		\node at (2,-1) {$\mathcal{Z}_S^0$};
	
	\end{tikzpicture}
\end{center}
is commutative.
\begin{theorem}\label{Z homeomorphism}
	 The map $\mathcal{Z}_S^0:(\mathcal{M}_S/\sim ,\tau_q) \rightarrow (\mathcal{U}_X,\tau)$ is a homeomorphism.
\end{theorem}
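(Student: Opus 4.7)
The plan is to establish that $\mathcal{Z}_S^0$ is a continuous bijection from a compact space onto a Hausdorff space, at which point it automatically becomes a homeomorphism. I will verify each of the four conditions (well-defined, injective, surjective, continuous) in turn, and then invoke the standard compactness-Hausdorff trick.

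First I would observe that $\mathcal{Z}_S^0$ is well-defined and injective simply by design: two classes $[M]$ and $[N]$ coincide precisely when $\mathcal{Z}_S[M]=\mathcal{Z}_S[N]$, so the rule $[M]\mapsto\mathcal{Z}_S[M]$ neither depends on the representative nor collapses distinct classes. Surjectivity is immediate from Theorem \ref{preimage of ultra}: every $\mathcal{A}$-ultrafilter $\mathcal{U}$ on $X$ equals $\mathcal{Z}_S[M]$ for some maximal ideal $M$, and then $\mathcal{Z}_S^0([M])=\mathcal{U}$.

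Next I would prove continuity of $\mathcal{Z}_S^0$ via the universal property of the quotient topology. Since $\tau_q$ is, by definition, the finest topology on $\mathcal{M}_S/\sim$ making $\Pi$ continuous, a map out of $(\mathcal{M}_S/\sim,\tau_q)$ is continuous if and only if its composition with $\Pi$ is continuous on $(\mathcal{M}_S,\tau_S)$. But the commutative triangle already recorded in the excerpt gives $\mathcal{Z}_S^0\circ\Pi=\mathcal{Z}_S$, and Corollary \ref{Z continuous} says $\mathcal{Z}_S$ is continuous from $(\mathcal{M}_S,\tau_S)$ to $(\mathcal{U}_X,\tau)$. Therefore $\mathcal{Z}_S^0$ is continuous.

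Finally I would upgrade the continuous bijection to a homeomorphism by a compactness argument. The space $(\mathcal{M}_S,\tau_S)$ is compact $T_1$ (noted in the excerpt from $7M$ in \cite{GJ}), so its continuous image $\Pi(\mathcal{M}_S)=\mathcal{M}_S/\sim$ is compact in $\tau_q$. The target $(\mathcal{U}_X,\tau)$ is compact Hausdorff (zero-dimensional, in fact), as recorded from Theorem $2.10$ of \cite{ABS2020}. A continuous bijection from a compact space onto a Hausdorff space is closed, hence a homeomorphism; applying this to $\mathcal{Z}_S^0$ concludes the proof.

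The only step I expect to require any care is the verification that $\Pi$ really is a quotient map in the strong sense needed to apply the universal property — but this is built into the definition of $\tau_q$, so no real obstacle arises. Everything else is a routine chase through earlier results; the essential content of the theorem has already been packaged into Theorems \ref{pseudoprime}, \ref{preimage of ultra}, and Corollary \ref{Z continuous}, and the topological conclusion follows from compactness of $\mathcal{M}_S$ together with Hausdorffness of $\mathcal{U}_X$.
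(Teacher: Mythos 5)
Your proposal is correct and follows essentially the same route as the paper: injectivity and well-definedness from the definition of $\sim$, surjectivity from Theorem \ref{preimage of ultra}, continuity from $\mathcal{Z}_S^0\circ\Pi=\mathcal{Z}_S$ together with the quotient topology (the paper just spells out the universal property by pulling back basic closed sets $S_E$), and the final upgrade via compactness of $\mathcal{M}_S/\sim$ and Hausdorffness of $(\mathcal{U}_X,\tau)$.
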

\begin{proof}
	 Since by Theorem \ref{preimage of ultra} the map $\mathcal{Z}_S$ is onto $\mathcal{U}_X$, it follows that the map $\mathcal{Z}_S^0$ is onto $\mathcal{U}_X$. The injectivity of the map $\mathcal{Z}_S^0$ follows from the definition of equivalence relation $\sim$ on $\mathcal{M}_S$. To prove the continuity of the map $\mathcal{Z}_S^0$, let $S_E=\{\mathcal{U}\in \mathcal{U}_X:E\in \mathcal{U}\} $ be a typical basic closed set in the space $(\mathcal{U}_X,\tau)$, here $E\in \mathcal{A}
$. Since by the Corollary \ref{Z continuous}, $\mathcal{Z}_S$ is a continuous map, it follows that $\mathcal{Z}_S^{-1}(S_E)$ is a closed set in $(\mathcal{M}_S,\tau_S)$. But $\mathcal{Z}_S^{-1}(S_E)=(\mathcal{Z}_S^0 \circ \Pi)^{-1}(S_E)=\Pi^{-1}({(\mathcal{Z}_S^0)^{-1}}(S_E))$. Hence, since $\Pi:(\mathcal{M}_S,\tau_S)\rightarrow (\mathcal{M}_S/\sim,\tau_q)$ is a quotient map, it follows that $(\mathcal{Z}_S^0)^{-1}(S_E)$ is closed in the quotient space $(\mathcal{M}_S/\sim,\tau_q)$. Thus $\mathcal{Z}_S^0$ turns out to be a continuous map. To complete this theorem we need to show $\mathcal{Z}_S^0$ is a closed map. For that purpose let $K$ be a closed set in $(
\mathcal{M}_S/ \sim,\tau_q)$. But note that $(\mathcal{M}_S,\tau_S)$, like any structure space of a commutative ring with unity, is a compact space and $\Pi$ is a continuous surjection onto $(\mathcal{M}_S/\sim, \tau_q)$. Therefore the later space is also compact. Accordingly $K$ is a compact subspace of $(\mathcal{M}_S/\sim,\tau_q)$. As $\mathcal{Z}_S^0$ is a continuous map, already settled, it follows that $\mathcal{Z}_S^0(K)$ is a compact subset of $(\mathcal{U}_X,\tau)$. Since $(\mathcal{U}_X,\tau)$ is Hausdorff, this implies that $\mathcal{Z}_S^0(K)$ is a closed subset of $(\mathcal{U}_X,\tau)$. Thus it is proved that $\mathcal{Z}_S^0$ is a closed map. 
\end{proof}
The next result offers yet another characterization of Gelfandness of the ring $S(X,\mathcal{A})$.
\begin{theorem}\label{Gelfand to homeomorphism}
	The following three statements are equivalent for a $\chi$-ring $S(X,\mathcal{A})$;
	\begin{enumerate}
		\item $S(X,\mathcal{A})$ is a Gelfand ring.
		\item The map $\mathcal{Z}_S:(\mathcal{M}_S,\tau_S)\rightarrow (\mathcal{U}(X),\tau)$ is a homeomorphism.
		\item $(\mathcal{M}_S,\tau_S)$ is homeomorphic to $(\mathcal{U}(X),\tau)$.
	\end{enumerate}
	
\end{theorem}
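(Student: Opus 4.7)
The plan is to establish the three implications cyclically as $(1) \Rightarrow (2) \Rightarrow (3) \Rightarrow (1)$.

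For $(1) \Rightarrow (2)$, the Gelfand hypothesis combined with Theorem \ref{bijection} gives injectivity of $\mathcal{Z}_S$, while Theorem \ref{preimage of ultra} secures its surjectivity onto $\mathcal{U}_X$, so $\mathcal{Z}_S$ is a bijection. Continuity is already recorded in Corollary \ref{Z continuous}. Since $(\mathcal{M}_S,\tau_S)$ is compact (the well-known compactness of the structure space of any commutative ring with unity, see $7M$ in \cite{GJ}) and $(\mathcal{U}_X,\tau)$ is Hausdorff by Theorem $2.10$ of \cite{ABS2020}, the map $\mathcal{Z}_S$ carries closed (hence compact) subsets of the domain to compact (hence closed) subsets of the target; thus $\mathcal{Z}_S$ is a closed continuous bijection, and therefore a homeomorphism. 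The step $(2) \Rightarrow (3)$ is immediate, since a homeomorphism certainly exhibits the two spaces as homeomorphic.

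The substantive direction is $(3) \Rightarrow (1)$. The strategy I would pursue is to transfer Hausdorffness across the given homeomorphism: $(\mathcal{M}_S,\tau_S)$ inherits the Hausdorff property from $(\mathcal{U}_X,\tau)$, which is Hausdorff by Theorem $2.10$ of \cite{ABS2020}. One then invokes the classical characterization of De Marco--Orsatti for commutative rings with unity: such a ring is Gelfand (equivalently, a pm-ring) if and only if its maximal ideal space in the hull-kernel topology is Hausdorff. This yields (1) at once and closes the cycle.

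I expect the main obstacle to be justifying the Hausdorff-implies-Gelfand passage, particularly if a self-contained argument is preferred over citing De Marco--Orsatti. A direct route would assume $(\mathcal{M}_S,\tau_S)$ Hausdorff and, for contradiction, fix a prime ideal $P$ contained in two distinct maximal ideals $M_1 \neq M_2$; Hausdorffness then produces $f,g \in S(X,\mathcal{A})$ with $f \notin M_1$, $g \notin M_2$, and $\mathcal{M}_f \cup \mathcal{M}_g = \mathcal{M}_S$, i.e., every maximal ideal contains $f$ or $g$. Using $f,g \notin P$ and working inside the integral domain $S(X,\mathcal{A})/P$, the local invertibility machinery developed in Section 2 can be brought to bear, in the spirit of the proof of Theorem \ref{bijection}, to manufacture the required contradiction. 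Either route completes the equivalence.
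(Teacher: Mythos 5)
Your proposal is correct and follows essentially the same route as the paper: $(1)\Rightarrow(2)$ via bijectivity plus continuity plus the compact-domain/Hausdorff-codomain argument (the paper packages this through the quotient space and Theorem \ref{Z homeomorphism}, but the underlying closed-map argument is identical), and $(3)\Rightarrow(1)$ by transferring Hausdorffness and invoking the De Marco--Orsatti characterization from \cite{MO}, exactly as the paper does. The self-contained alternative you sketch for Hausdorff-implies-Gelfand is unnecessary, since the paper simply cites $1.3$ of \cite{MO}.
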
 \begin{proof}
If $S(X,\mathcal{A})$ is a Gelfand ring, then it is clear that the space $(\mathcal{M}_S,\tau_S)$  and $(\mathcal{M}_S/\sim,\tau_q)$ are essentially the same. Hence from the Theorem \ref{Z homeomorphism}, it follows that the map $\mathcal{Z}_S:(\mathcal{M}_S,\tau_S) \rightarrow (\mathcal{U}_X,\tau)$ is a homeomorphism. This proves $(1)$ implies $(2)$.\newline $(2)$ implies $(3)$ is trivial.
\newline If $(\mathcal{M}_S,\tau_S)$ is  homeomorphic to $ (\mathcal{U}_X,\tau)$, then  since $\tau$ is already known to be Hausdorff, it follows that the topology $\tau_S$ on $\mathcal{M}_S$ is also Hausdorff. At this moment we recall that the structure space of a commutative ring $A$ with identity is Hausdorff if and only if $A$ is a Gelfand ring. This is proved in $1.3$ in \cite{MO}. Therefore $S(X,\mathcal{A})$ is a Gelfand ring. This proves $(3)$ implies $(1)$.
\end{proof}

\begin{corollary}
	 The structure space of each intermediate ring of measurable functions, lying between $\mathcal{M}^*(X,\mathcal{A})$ and $\mathcal{M}(X,\mathcal{A})$ is homeomorphic to the space $(\mathcal{U}_X,\tau)$. This follows from Corollary \ref{intermediate corollary} and Theorem \ref{Gelfand to homeomorphism}. This is precisely Theorem $4.8$ in \cite{ABS2020}.
\end{corollary}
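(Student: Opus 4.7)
The plan is to reduce the statement to two earlier results already in hand. First I would note that any intermediate ring $N(X,\mathcal{A})$ sitting between $\mathcal{M}^*(X,\mathcal{A})$ and $\mathcal{M}(X,\mathcal{A})$ is automatically a $\chi$-ring: for every $E\in\mathcal{A}$ the characteristic function $\chi_E$ is bounded and measurable, hence lies in $\mathcal{M}^*(X,\mathcal{A}) \subseteq N(X,\mathcal{A})$. Consequently the entire theory developed above for $\chi$-rings, including the topologies $\tau_S$ on $\mathcal{M}_S$ and $\tau$ on $\mathcal{U}_X$ and the map $\mathcal{Z}_S$, applies verbatim with $S = N$.

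With this reduction in place the argument takes only two further inputs. The first is Corollary \ref{intermediate corollary}, which asserts that every intermediate ring is Gelfand. The second is the implication $(1) \Rightarrow (2)$ of Theorem \ref{Gelfand to homeomorphism}, which says that for a Gelfand $\chi$-ring the canonical map $\mathcal{Z}_S:(\mathcal{M}_S,\tau_S)\to(\mathcal{U}_X,\tau)$ is a homeomorphism. Concatenating these two gives the homeomorphism between the structure space of $N(X,\mathcal{A})$ and $(\mathcal{U}_X,\tau)$ immediately, via the same map $\mathcal{Z}_S$.

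There is effectively no obstacle here, since the corollary is by design a packaged consequence of the two preceding results; the proof is the single chain \emph{intermediate ring} $\Rightarrow$ \emph{Gelfand $\chi$-ring} $\Rightarrow$ \emph{structure space} $\cong (\mathcal{U}_X,\tau)$. The only mild bookkeeping point is to verify that the $\chi$-ring hypothesis of Theorem \ref{Gelfand to homeomorphism} is genuinely met by $N(X,\mathcal{A})$, which is exactly the routine containment $\chi_E \in \mathcal{M}^*(X,\mathcal{A})$ observed at the outset. As a bonus, the argument recovers Theorem $4.8$ of \cite{ABS2020} as a special case of the more general framework for $\chi$-rings built above.
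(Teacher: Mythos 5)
Your argument is correct and is exactly the paper's own route: the corollary is cited in the text as following from Corollary \ref{intermediate corollary} (intermediate rings are Gelfand) together with the implication $(1)\Rightarrow(2)$ of Theorem \ref{Gelfand to homeomorphism}, with the observation that every intermediate ring is a $\chi$-ring already recorded in Definition 2.1. Your extra remark that the $\chi$-ring hypothesis is met because $\chi_E\in\mathcal{M}^*(X,\mathcal{A})\subseteq N(X,\mathcal{A})$ is the same routine check the paper relies on implicitly.
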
 \begin{corollary}
The $\chi$-ring $S(X,\mathcal{A})$ is Gelfand if and only if $\mathcal{Z}_S^\chi:(\mathcal{M}_S,\tau_\chi)\rightarrow (\mathcal{U}_X,\tau)$ is a homeomorphism. This follows on using Theorems \ref{Gelfand to homeomorphism} and \ref{Z chi continous}.
\end{corollary}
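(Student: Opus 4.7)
The plan is to deduce this equivalence by sandwiching the claim between Theorem \ref{Gelfand to homeomorphism} and Theorem \ref{Z chi continous}, using the inclusion $\tau_\chi\subseteq\tau_S$ together with the standard compact-to-Hausdorff trick.

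For the forward implication, I would assume $S(X,\mathcal{A})$ is Gelfand. Then Theorem \ref{Gelfand to homeomorphism} yields that $\mathcal{Z}_S$ is a homeomorphism from $(\mathcal{M}_S,\tau_S)$ onto $(\mathcal{U}_X,\tau)$, and in particular its underlying point-map is a bijection. Since $\mathcal{Z}_S^\chi$ is the same point-map (only the topology on the domain has changed), $\mathcal{Z}_S^\chi$ is also a bijection. By Theorem \ref{Z chi continous} it is continuous, and since $\tau_\chi\subseteq\tau_S$ the space $(\mathcal{M}_S,\tau_\chi)$ is compact (a coarser topology on a compact space is compact, as the identity from $(\mathcal{M}_S,\tau_S)$ onto $(\mathcal{M}_S,\tau_\chi)$ is a continuous surjection). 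Because $(\mathcal{U}_X,\tau)$ is Hausdorff, the familiar point-set fact that a continuous bijection from a compact space onto a Hausdorff space is automatically a homeomorphism delivers the conclusion.

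For the reverse implication, I would simply observe that if $\mathcal{Z}_S^\chi$ is a homeomorphism then, as a set map, $\mathcal{Z}_S:\mathcal{M}_S\to\mathcal{U}_X$ is a bijection. Combined with Corollary \ref{Z continuous} (continuity with respect to the finer topology $\tau_S$) and the same compact-to-Hausdorff argument, this shows that $\mathcal{Z}_S:(\mathcal{M}_S,\tau_S)\to(\mathcal{U}_X,\tau)$ is itself a homeomorphism; then the implication $(2)\Rightarrow(1)$ of Theorem \ref{Gelfand to homeomorphism} gives Gelfandness of $S(X,\mathcal{A})$. (Alternatively, one could invoke Theorem \ref{bijection} directly once bijectivity of $\mathcal{Z}_S$ is in hand, bypassing the continuity step, but the route through Theorem \ref{Gelfand to homeomorphism} matches the hint the authors provide.)

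I do not anticipate a genuine obstacle here: the argument reduces to the slogan ``a continuous bijection from a compact space onto a Hausdorff space is a homeomorphism'' together with the two quoted theorems. The only mild subtlety worth explicitly noting is that $(\mathcal{M}_S,\tau_\chi)$ inherits compactness from $(\mathcal{M}_S,\tau_S)$ via $\tau_\chi\subseteq\tau_S$, so that the compact-to-Hausdorff trick is legitimately applicable on the coarser side.
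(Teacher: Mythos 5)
Your argument is correct and follows essentially the route the authors intend: the paper's ``proof'' is just the one-line citation of Theorems \ref{Gelfand to homeomorphism} and \ref{Z chi continous}, and your write-up supplies exactly the missing details (the point-map of $\mathcal{Z}_S^\chi$ coincides with that of $\mathcal{Z}_S$, compactness of $(\mathcal{M}_S,\tau_\chi)$ via $\tau_\chi\subseteq\tau_S$, and the compact-to-Hausdorff trick in both directions). No gaps; your parenthetical shortcut via Theorem \ref{bijection} is also valid.
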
 
\begin{remark}
	 If $S(X,\mathcal{A})$ is a Gelfand ring, then all the four topologies $\tau$, $\tau_S$, $\tau_\chi$, $\tau_q$ constructed over appropriate sets are pairwise homeomorphic. 
\end{remark} 
\section{ Conditions for Von-Neumann regularity of the $\chi$-rings. }
 Before offering several  equivalent descriptions of the Von-Neumann regularity of a $\chi$-ring $S(X,\mathcal{A})$ we feel it imperative to initiate a kind of ideal called $\mathcal{Z}_S$-ideal in this ring. 
 \begin{definition}
 	 An ideal $I$ in $S(X,\mathcal{A})$ is called a  $\mathcal{Z}_S$-ideal if for any $f\in S(X,\mathcal{A})$, $\mathcal{Z}_S(f)\subseteq \mathcal{Z}_S[I]$ implies that $f\in I$.
 	  \end{definition}
    We recall in this context the notion of $Z$-ideals in a commutative ring $A$ with unity. An ideal $I$ in $A$ is called a $Z$-ideal if for each $a\in I$, $M_a\subseteq I$, here $M_a$ is the intersection of all the maximal ideals in $A$, which contain $a$. It follows immediately that every maximal ideal in $A$ is a $Z$-ideal. $Z$-ideals in $A$ are introduced in \cite{M1973}. We would like to mention here that inspite of the similarity of notations, the behavior of $\mathcal{Z}_S$-ideals in $S(X,\mathcal{A})$ as opposed to $Z$-ideal, is a bit surprising. The following theorem attests to this fact in clear terms. 
    \begin{theorem}
    	 Each maximal ideal in the $\chi$-ring $S(X,\mathcal{A})$ is a $\mathcal{Z}_S$-ideal if and only if $S(X,\mathcal{A})$ is a Gelfand ring.
    \end{theorem}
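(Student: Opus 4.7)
The proof splits naturally into two implications.

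For the direction that each maximal ideal being a $\mathcal{Z}_S$-ideal implies Gelfandness, I would take a prime ideal $P$ contained in two maximal ideals $M$ and $N$. Since a prime ideal is automatically pseudo prime, Remark \ref{maxi to ultra} gives $\mathcal{Z}_S[M]=\mathcal{Z}_S[P]=\mathcal{Z}_S[N]$. For any $f \in M$, the inclusion $\mathcal{Z}_S(f) \subseteq \mathcal{Z}_S[M] = \mathcal{Z}_S[N]$ then forces $f \in N$ by the $\mathcal{Z}_S$-ideal property of $N$, so $M \subseteq N$ and maximality yields $M = N$.

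For the converse, assume $S(X,\mathcal{A})$ is Gelfand, fix a maximal ideal $M$, set $\mathcal{U} := \mathcal{Z}_S[M]$, and let $f \in S(X,\mathcal{A})$ satisfy $\mathcal{Z}_S(f) \subseteq \mathcal{U}$; the goal is to show $f \in M$. I would introduce the auxiliary ideal $O_M := \mathcal{Z}_\mathcal{M}^{-1}[\mathcal{U}] \cap S(X,\mathcal{A})$. By Theorem $4.5(ii)$ of \cite{ABS2020}, $\mathcal{Z}_\mathcal{M}^{-1}[\mathcal{U}]$ is maximal in $\mathcal{M}(X,\mathcal{A})$, and so $O_M$ is prime in $S(X,\mathcal{A})$. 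The argument appearing in the proof of Theorem \ref{bijection} applies verbatim to yield $O_M \subseteq M$, and Gelfandness then identifies $M$ as the unique maximal ideal of $S(X,\mathcal{A})$ containing $O_M$.

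The crux of the proof is to verify that the ideal $O_M + (f)$ is proper; once this is established, $O_M + (f)$ lies inside some maximal ideal $N$, which (being a maximal ideal containing the prime $O_M$) must coincide with $M$, delivering $f \in M$. I would argue properness by contradiction: suppose $1 = p + fh$ for some $p \in O_M$ and $h \in S(X,\mathcal{A})$. The membership $p \in O_M$ gives $\mathcal{Z}_\mathcal{M}(p) \subseteq \mathcal{U}$, and since $Z(p) \in \mathcal{Z}_\mathcal{M}(p)$, we obtain $Z(p) \in \mathcal{U}$. On the set $Z(p)$ the identity $1 = p + fh$ collapses to $fh = 1$, exhibiting $f$ as $Z(p)$-regular in $S(X,\mathcal{A})$ and hence $X \setminus Z(p) \in \mathcal{Z}_S(f) \subseteq \mathcal{U}$. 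But then the proper filter $\mathcal{U}$ would contain $Z(p) \cap (X \setminus Z(p)) = \emptyset$, a contradiction.

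The main obstacle is precisely this final collision. One must translate the abstract B\'ezout-type relation $1 = p + fh$ into a statement about measurable sets living in the ultrafilter $\mathcal{U}$: the membership $p \in O_M$ controls $Z(p)$ through the zero-set description of $\mathcal{Z}_\mathcal{M}(p)$, while the hypothesis $\mathcal{Z}_S(f) \subseteq \mathcal{U}$ controls $X \setminus Z(p)$ through local invertibility of $f$ on $Z(p)$. It is the simultaneous membership of these two complementary measurable sets in $\mathcal{U}$ that produces the contradiction and completes the argument.
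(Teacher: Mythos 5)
Your proof is correct and follows essentially the same route as the paper: the forward direction is identical, and the converse uses the same auxiliary prime ideal $\mathcal{Z}_\mathcal{M}^{-1}[\mathcal{U}]\cap S(X,\mathcal{A})$, shows it is contained in $M$, and reduces everything to the properness of the ideal it generates together with $f$, after which Gelfandness forces $f\in M$. If anything, your direct contradiction from a relation $1=p+fh$ is a more careful justification of the properness step than the paper's compressed assertion that $\mathcal{Z}_S[\langle P,f\rangle]=\mathcal{Z}_S[P]$.
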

\begin{proof}
	First assume that each maximal ideal in $S(X,\mathcal{A})$ is a $\mathcal{Z}_S$-ideal. Let $P$ be a prime ideal in the ring $S(X,\mathcal{A})$ and $M$ and $N$ be two maximal ideals in the ring with $P\subseteq M\cap N$. Then $\mathcal{Z}_S[P] \subseteq \mathcal{Z}_S[M] \cap \mathcal{Z}_S[N]$. It follows from Remark \ref{maxi to ultra} that $\mathcal{Z}_S[P]=\mathcal{Z}_S[N]=\mathcal{Z}_S[M]$= an $\mathcal{A}$-ultrafilter on $X$. Now choosing $f\in M$ arbitrarily, this yields $\mathcal{Z}_S(f)\subseteq \mathcal{Z}_S[M]=\mathcal{Z}_S[N]$. This further implies as $N$ is a $\mathcal{Z}_S$-ideal that $f\in N$. Therefore $M\subseteq N$ and hence $M=N$ because of the maximality of the ideals. Thus it is proved that $S(X,\mathcal{A})$ is a Gelfand ring.
	\newline Conversely, let $S(X,\mathcal{A})$ be a Gelfand ring and $M$ be a maximal ideal in it. Suppose $ f\in S(X,\mathcal{A})$ is such that $\mathcal{Z}_S(f) \subseteq \mathcal{Z}_S[M]$. We shall show that $f\in M$ and that will finish the theorem. It follows from the Theorem \ref{pseudoprime} that $\mathcal{U}=\mathcal{Z}_S[M]$ is an $\mathcal{A}$-ultrafilter. In the proof of the first part of the Theorem \ref{preimage of ultra}, it is further observed that $\mathcal{Z}_S[\mathcal{Z}_\mathcal{M}^{-1}[\mathcal{U}]\cap S(X,\mathcal{A})]=\mathcal{U}$. Let $P=\mathcal{Z}_\mathcal{M}^{-1}[\mathcal{U}]\cap S(X,\mathcal{A})$, then $P$ is a prime ideal in the ring $S(X,\mathcal{A})$. We shall show that $P \subseteq M$. So let $g\in P\setminus\{0\}$, then $X\setminus Z(g) \neq \emptyset $ and hence $g$ is invertible on $X\setminus Z(g)$ in the ring $\mathcal{M}(X,\mathcal{A})$. This implies that $Z(g)\in \mathcal{Z}_\mathcal{M}(g) \subseteq \mathcal{U}$ as $g\in \mathcal{Z}_\mathcal{M}^{-1}[\mathcal{U}]$. Therefore $X\setminus Z(g ) \notin \mathcal{U}$, as $\mathcal{U}$ has finite intersection property. Since $\mathcal{U}=\mathcal{Z}_S[M]$, this implies that $\chi_{Z(g)}\notin M$. But it is an easy verification that $\chi_{Z(g)} \cdot \chi_{X\setminus Z(g)}=0\in M$. Hence it follows that $\chi_{X\setminus Z(g)}\in M$. But then $g=\chi_{X\setminus Z(g)} \cdot g \in M$. Thus it is proved that $P \subseteq M$. To complete the theorem we need to show that $<P,f>$ is a proper ideal in $S(X,\mathcal{A})$ for this will imply in-view of assumed Gelfandness of the ring $S(X,\mathcal{A})$ that $f\in M$, which is exactly what we need to check. Indeed by Theorem \ref{pseudoprime}, $\mathcal{Z}_S[P]$ is an $\mathcal{A}$-ultrafilter on $X$ and $\mathcal{Z}_S[P]\subseteq \mathcal{Z}_S[M]=\mathcal{U}$ implies that $\mathcal{Z}_S[P]=\mathcal{Z}_S[M]=\mathcal{U}$. Therefore $\mathcal{Z}_S(f)\subseteq \mathcal{Z}_S[M]$ $\implies$ $\mathcal{Z}_S(f)\subseteq \mathcal{Z}_S[P]$. Consequently $\mathcal{Z}_S[<P,f>]=\mathcal{Z}_S[P]=\mathcal{U}$. This implies that $<P,f>$ is a proper ideal in $S(X,\mathcal{A})$ for $\emptyset\in \mathcal{Z}_S[S(X,\mathcal{A})]$ because $\emptyset\in \mathcal{Z}_S(1)$.
\end{proof}
The following elementary but useful fact will be employed to prove our main theorem in this section, offering various equivalent formulations for the Von-Neumann regularity of the ring $S(X,\mathcal{A})$.
\begin{theorem}
	\label{intersection of prime ideals}
	Any $\mathcal{Z}_S$-ideal $I$ in $S(X,\mathcal{A})$ is the intersection of all prime ideals, which contain it.
\end{theorem}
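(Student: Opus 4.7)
The plan is to reduce the assertion to the standard fact that a radical ideal in a commutative ring with unity equals the intersection of all prime ideals containing it. The inclusion $I\subseteq \bigcap\{P:P\text{ prime},\ P\supseteq I\}$ is automatic, so the real content is the reverse inclusion, which I would obtain by verifying that every $\mathcal{Z}_S$-ideal is a radical ideal.

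The key observation is that $\mathcal{Z}_S(f^n)=\mathcal{Z}_S(f)$ for every $n\geq 1$. First I would note that if $E\in\mathcal{Z}_S(f)$, so that there exists $g\in S(X,\mathcal{A})$ with $fg|_{E^c}=1$, then taking the $n$-th power gives $f^n g^n|_{E^c}=1$, hence $E\in\mathcal{Z}_S(f^n)$. Conversely, if $E\in\mathcal{Z}_S(f^n)$ with $f^n h|_{E^c}=1$, then $f\cdot(f^{n-1}h)|_{E^c}=1$, and $f^{n-1}h\in S(X,\mathcal{A})$, so $E\in\mathcal{Z}_S(f)$. This cheap but crucial identity transfers the radical property from the filter $\mathcal{Z}_S[I]$ back to the ideal $I$.

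With this identity in hand I would argue that $I$ is a radical ideal as follows. Suppose $f^n\in I$ for some $n\geq 1$. Then $\mathcal{Z}_S(f^n)\subseteq\mathcal{Z}_S[I]$ by definition of $\mathcal{Z}_S[I]$, but $\mathcal{Z}_S(f^n)=\mathcal{Z}_S(f)$, so $\mathcal{Z}_S(f)\subseteq\mathcal{Z}_S[I]$. Since $I$ is assumed to be a $\mathcal{Z}_S$-ideal, this forces $f\in I$. Thus $\sqrt{I}=I$.

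Finally, I would invoke the standard commutative algebra result that in any commutative ring with unity, $\sqrt{J}=\bigcap\{P:P\text{ prime},\ P\supseteq J\}$ for every ideal $J$ (proved by the usual Zorn's lemma argument on the multiplicative set $\{f^n:n\geq 0\}$ when $f\notin\sqrt{J}$). Applied to our radical ideal $I$, this yields $I=\sqrt{I}=\bigcap\{P:P\text{ prime},\ P\supseteq I\}$, completing the proof. There is no serious obstacle here; the only non-formal ingredient is the identity $\mathcal{Z}_S(f^n)=\mathcal{Z}_S(f)$, and everything else is bookkeeping.
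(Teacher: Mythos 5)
Your proposal is correct and follows essentially the same route as the paper: both reduce the statement to showing $I=\sqrt{I}$ via the identity $\mathcal{Z}_S(f^n)=\mathcal{Z}_S(f)$ together with the $\mathcal{Z}_S$-ideal property, and then invoke the standard fact that the radical equals the intersection of the prime ideals containing the ideal. The only difference is that you spell out the verification of $\mathcal{Z}_S(f^n)=\mathcal{Z}_S(f)$, which the paper asserts without proof.
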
 
\begin{proof}
	Let $\sqrt{I}$ be the intersection of all prime ideals in $S(X,\mathcal{A})$ which contain $I$. Suppose $f\in \sqrt{I}$, then there exists $n\in \mathbb{N}$ such that $f^n\in I$. This implies that $\mathcal{Z}_S(f^n)\subseteq \mathcal{Z}_S[I]$. But $\mathcal{Z}_S(f^n)=\mathcal{Z}_S(f)$, therefore $\mathcal{Z}_S(f)\subseteq \mathcal{Z}_S[I]$. As $I$ is a $\mathcal{Z}_S$-ideal, this implies $f\in I$. Thus $I=\sqrt{I}$.
\end{proof}
\begin{theorem}
	The following statements are equivalent for a $\chi$-ring $S(X,\mathcal{A})$.\begin{enumerate}
		\item Each prime ideal in $S(X,\mathcal{A})$ is maximal.
		\item $S(X,\mathcal{A})$ is a Von-Neumann regular ring in the sense given $f\in S(X,\mathcal{A})$ there exists $g\in S(X,\mathcal{A})$ such that $f=f^2g$.
		\item For each $f\in S(X,\mathcal{A})$, $Z(f)\in \mathcal{Z}_S(f)$.
		\item  For each ideal $I$ in $S(X,\mathcal{A})$, $Z[I]=\{Z(f):f\in I\}$ is an $\mathcal{A}$-filter on $X$,
	
		 and $Z^{-1}[Z[I]]=I$. \item The map $Z:\mathcal{I}_S\rightarrow \mathcal{A}_X$ given by $I\rightarrow Z[I]$ is a bijection on the set $\mathcal{I}_S$ of all ideals in $S(X,\mathcal{A})$ onto the set $\mathcal{A}_X$ of all $\mathcal{A}$-filters on $X$.
		\item For all $f,g\in S(X,\mathcal{A})$, $<f,g>=<f^2+g^2>$.
		\item If $P$ is a prime ideal in $S(X,\mathcal{A})$ and $f\in P$ then $Z(f)\in \mathcal{Z}_S[P]$.\item If $M$ is a maximal ideal in $S(X,\mathcal{A})$ and $f\in M$ then $Z(f)\in \mathcal{Z}_S[M]$.
		\item If $Z(f)\subseteq Z(g)$ where $f,g \in S(X,\mathcal{A})$, then $g$ is a multiple of $f$ in the ring $S(X,\mathcal{A})$.
		\item  Every ideal in $S(X,\mathcal{A})$ is a $\mathcal{Z}_S$-ideal.
		\item Every principal ideal in $S(X,\mathcal{A})$ is generated by an idempotent.
		\item Every ideal in $S(X,\mathcal{A})$ is the intersection of all prime ideals which contain it.
		\item Every ideal in $S(X,\mathcal{A})$ is the intersection of all maximal ideals which contain it.
		\item  For any ideal $I$ in $S(X,\mathcal{A})$, $I=\mathcal{Z}_S^{-1}[\mathcal{Z}_S[I]]$, here $\mathcal{Z}_S^{-1}[\mathcal{Z}_S[I]]$ stands for $\mathcal{Z}_\mathcal{M}^{-1}[\mathcal{Z}_S[I]]\cap S(X,\mathcal{A})$.
		\item Each ideal in $S(X,\mathcal{A})$ is a $Z$-ideal.
	\end{enumerate}
	
\end{theorem}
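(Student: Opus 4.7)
The plan is to organize the fifteen conditions into a web around the core equivalence (2)$\Leftrightarrow$(3)$\Leftrightarrow$(9)$\Leftrightarrow$(11), which is essentially a rewriting of Von-Neumann regularity in terms of local invertibility and zero sets. The core implications are direct arithmetic manipulations of the identity $f=f^{2}g$: for (2)$\Rightarrow$(3), the relation $f(1-fg)=0$ makes $fg$ an idempotent equal to $1$ on $X\setminus Z(f)$, so $f$ is $(X\setminus Z(f))$-regular; for (3)$\Rightarrow$(9), given $Z(f)\subseteq Z(g)$ and $h$ with $fh=1$ on $X\setminus Z(f)$, one checks that $g=f\cdot(hg)$ agrees with $g$ on $X\setminus Z(f)$ and both sides vanish on $Z(f)\subseteq Z(g)$; (9)$\Rightarrow$(2) is immediate from $Z(f)=Z(f^{2})$; and (11)$\Leftrightarrow$(2) is the standard idempotent trick, with $e=fg$ generating $\langle f\rangle$.

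From this core I would peel off the remaining conditions by short arguments. Condition (6) follows from (9) via $Z(f^{2}+g^{2})=Z(f)\cap Z(g)\subseteq Z(f),Z(g)$, while (6) applied with $g=0$ collapses to (2). For (10) and (15), the forward direction uses that if $\mathcal{Z}_{S}(f)\subseteq\mathcal{Z}_{S}[I]$ then $Z(f)\in\mathcal{Z}_{S}[I]$, so some $h\in I$ is regular on $X\setminus Z(f)$ with inverse $k$, whence $f=h\cdot(fk)\in I$ since both sides agree on $X\setminus Z(f)$ and vanish on $Z(f)$; the converse applies (10) to $I=\langle f^{2}\rangle$ using $\mathcal{Z}_{S}(f)=\mathcal{Z}_{S}(f^{2})$. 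Conditions (12), (13) then follow from (10) via Theorem~\ref{intersection of prime ideals} together with the fact, automatic once (1) holds, that every prime equals the unique maximal above it. Conditions (4), (5), (14) on the map $Z$ follow from (9) since $Z(f)=Z(g)$ forces mutual divisibility and hence $\langle f\rangle=\langle g\rangle$, while supersets of $Z(f)$ inside $\mathcal{A}$ are realized via the $\chi$-ring identity $A=Z(\chi_{X\setminus A})$. Conditions (7), (8) are immediate from (3) because $f\in P$ forces $Z(f)\in\mathcal{Z}_{S}(f)\subseteq\mathcal{Z}_{S}[P]$; for the converse I would consider $\langle f,\chi_{Z(f)}\rangle$ and observe that if it were proper and contained in some maximal $M$, then both $Z(f)$ (from $f\in M$ via the hypothesis) and $X\setminus Z(f)=Z(\chi_{Z(f)})$ would lie in the filter $\mathcal{Z}_{S}[M]$, a contradiction.

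The genuine obstacle is the implication (1)$\Rightarrow$(2). My plan here is to prove $\langle f\rangle+\mathrm{Ann}(f)=S(X,\mathcal{A})$: if this sum were proper, it would lie in some maximal $M$, and by (1) such an $M$ would also be minimal among primes (every prime contains a minimal prime and every prime is maximal). Since $S(X,\mathcal{A})$ is reduced (being a ring of real-valued functions), the classical lemma that a minimal prime in a reduced ring consists of zero-divisors with witnesses outside the prime produces $s\notin M$ with $sf=0$, i.e.\ $s\in\mathrm{Ann}(f)\subseteq M$, a contradiction. Writing then $1=\alpha f+\beta$ with $\beta f=0$ and multiplying by $f$ delivers $f=\alpha f^{2}$, which is (2). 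The localization-at-$P$ argument underlying the minimal-primes lemma is the only step in the entire web that is not elementary manipulation with characteristic functions and zero sets.
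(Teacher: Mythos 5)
Your core cycle $(2)\Leftrightarrow(3)\Leftrightarrow(9)\Leftrightarrow(11)$ and the peel-off arguments for $(6)$, $(7)$, $(8)$, $(10)$ match the paper's computations almost verbatim, and in two places you genuinely improve on the paper: for $(1)\Rightarrow(2)$ the paper simply cites Goodearl's characterization of Von-Neumann regularity for reduced rings, whereas your argument via $\langle f\rangle+\mathrm{Ann}(f)=S(X,\mathcal{A})$ and the minimal-prime lemma in a reduced ring is a correct self-contained proof of that citation; and your converse for $(8)$ via the ideal $\langle f,\chi_{Z(f)}\rangle$ lands directly on $(3)$ and is cleaner than the paper's route $(8)\Rightarrow(1)$ through pseudo-prime ideals and the remark that $\mathcal{Z}_S[P]=\mathcal{Z}_S[M]$.

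There are, however, three genuine gaps. First, your treatment of $(15)$ proves the wrong statement: $(15)$ concerns $Z$-ideals in Mason's sense ($M_a\subseteq I$ for each $a\in I$, where $M_a$ is the intersection of all maximal ideals containing $a$), not $\mathcal{Z}_S$-ideals, and the argument you sketch ("if $\mathcal{Z}_S(f)\subseteq\mathcal{Z}_S[I]$ then $f\in I$") is exactly the argument for $(10)$; the paper handles $(15)$ by citing Mason's Theorem 1.2. The fix is routine ($(13)\Rightarrow(15)$ is immediate from the definition, and $(15)\Rightarrow(2)$ follows from $M_{f^2}=M_f\ni f$ together with $M_{f^2}\subseteq\langle f^2\rangle$), but as written $(15)$ is not proved. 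Second, your web has only one-way arrows into $(4)$, $(5)$, $(12)$, $(13)$ and $(14)$: you derive each from the core but never show any of them implies a core condition, so the fifteen-fold equivalence is not closed. The paper supplies $(4)\Rightarrow(2)$ and $(12)\Rightarrow(2)$ by applying the hypothesis to $\langle f^2\rangle$ and using $Z(f)=Z(f^2)$ (respectively, that $f$ and $f^2$ lie in the same primes), $(13)\Rightarrow(12)$ trivially, and $(14)\Rightarrow(1)$ via the remark that $\mathcal{Z}_S[P]=\mathcal{Z}_S[M]$ for $P\subseteq M$; these one-liners need to be stated. Third, $(2)\Rightarrow(1)$ is nowhere argued, yet you invoke "every prime equals the unique maximal above it" when deriving $(13)$; the standard argument ($f\notin P$ and $f(1-fg)=0\in P$ force $1-fg\in P$, so $P+\langle f\rangle=S(X,\mathcal{A})$) should be included, or the Goodearl equivalence cited in both directions as the paper does.
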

\begin{proof}
	$(1)\iff (2)$ : It follows from a result proved in \cite{G}  which says that a commutative reduced ring $A$ with identity is Von-Neumann regular if and only if every prime ideal in $A$ is maximal.
	\newline $(2) \implies (3)$ : Let $f\in S(X,\mathcal{A})$. Because of $(2)$, there exists $h\in S(X,\mathcal{A})$ such that $f=f^2h$, hence $f(1-fh)=0$. Thus $f$ is invertible on $X\setminus Z(f)$, therefore $Z(f)\in \mathcal{Z}_S(f)$.\newline $(3)\implies (2)$ : Let $f\in S(X,\mathcal{A})$. Then $Z(f)\in \mathcal{Z}_S(f)$ implies that there exists $h\in S(X,\mathcal{A})$ such that $fh|_{(X\setminus Z(f))}=1$. This shows that $f=f^2h$. Hence $(2)$ holds.\newline $(2)\implies (4)$ : Suppose $(2)$ holds. Then it is clear that an $f\in S(X,\mathcal{A})$ is a unit in this ring if and only if $Z(f)=\emptyset$. Consequently for an ideal (proper) $I$ in $S(X,\mathcal{A})$, $Z[I]=\{Z(f):f\in I\}$ is an $\mathcal{A}$-filter on $X$. Furthermore if $f\in S(X,\mathcal{A})$ and $g\in I$ are such that $Z(f)=Z(g)$, then since $g=g^2h$ for some $h\in S(X,\mathcal{A})$, this implies that $gh=1$ on $X\setminus Z(f)=X\setminus Z(g)$, consequently $f=fgh\in I$. This proves that $Z^{-1}[Z[I]]=I$.
	\newline $(4)\iff (5)$ : Trivial. \newline $(2) \implies(6)$ : Let $(2)$ hold and $f,g\in S(X,\mathcal{A})$. This implies that $<f^2+g^2> \subseteq <f,g>$. To prove the reverse inclusion relation, we observe due to the assumed Von-Neumann regularity of $S(X,\mathcal{A})$ that there exists $h\in S(X,\mathcal{A})$ such that $(f^2+g^2)h=1$ on the set $X\setminus Z(f^2+g^2)=(X\setminus Z(f)\cup (X\setminus Z(
	g))$. This clearly implies that $f=fh(f^2+g^2)$ on $X\setminus Z(f)$ and consequently $f=fh(f^2+g^2)$, thus $f\in <f^2+g^2>$. Analogously $g\in <f^2+g^2>$, hence $<f,g>=<f^2+g^2>$.\newline $(6) \implies (2)$ : Trivial.
	\newline $(4)\implies (2)$ : Let $(4)$ be true. Let $f\in S(X,\mathcal{A})$. If $f$ is a unit in $S(X,\mathcal{A})$, then $fg=1$ for some $g\in S(X,\mathcal{A})$, this implies $f=f^2g$. Assume therefore that $f$ is not a unit in $S(X,\mathcal{A})$. Then the principal ideal $<f^2>$ is a (proper) ideal in $S(X,\mathcal{A})$. Furthermore $Z(f)=Z(f^2)$ implies by virtue of the second part of the assumed condition $(4)$ that $f\in <f^2>$. Thus it is proved that $S(X,\mathcal{A})$ is Von-Neumann regular.
	\newline
	 $(3) \implies (7)$ : Trivial. \newline $(7)\implies (8)$ : Trivial.\newline $(8) \implies (1)$ : Let $(8)$ be true and $P$ be a prime ideal in $S(X,\mathcal{A})$. There exists a maximal ideal $M$ in $S(X,\mathcal{A})$ such that $P \subseteq M$. We shall show that $P=M$. So let $f\in M$, then by the assumed condition $(8)$, $Z(f)\in \mathcal{Z}_S[M]$. But by the Remark \ref{maxi to ultra} $\mathcal{Z}_S[M]=\mathcal{Z}_S[P]$, hence $Z(f)\in \mathcal{Z}_S[P]$. Since $\mathcal{Z}_S[P]$ is an $\mathcal{A}$-filter on $X$, it follows that $X\setminus Z(f)\notin\mathcal{Z}_S[P]$. This implies that $\chi_{Z(f)}\notin P$. But then $\chi_{(X\setminus Z(f))}\cdot \chi_{Z(f)}=0$, implies that $\chi_{(X\setminus Z(f))}\in P$. But $f=f\cdot \chi_{(X\setminus Z(f))}\in P$. Hence $P=M$.\newline $(2)\implies (9)$ : Let $(2)$ hold good and $Z(f)\subseteq Z(g)$, $f,g\in S(X,\mathcal{A})$. By $(2)$, there exists $h\in S(X,\mathcal{A})$ such that $f=f^2h$. Consequently $g=fgh,$ thus $g$ is a multiple of $f$.
	 \newline $(9)\implies (2)$ : Let $(9)$ be true. Choose $f\in S(X,\mathcal{A})$. Then $Z(f)=Z(f^2)$ implies in view of $(9)$ that $f$ is a multiple of $f^2$ in $S(X,\mathcal{A})$. Hence $S(X,\mathcal{A})$ is Von-Neumann regular.
	 \newline $(3)\implies (10)$: Let $(3)$ be true and $I$ be an ideal in $S(X,\mathcal{A})$. Let $f\in S(X,\mathcal{A})$ be such that $\mathcal{Z}_S(f)\subseteq \mathcal{Z}_S[I]$. To show that $f\in I$. Now by $(3)$, $Z(f)\in \mathcal{Z}_S(f)$. Hence $Z(f)\in \mathcal{Z}_S[I]$. This means that $Z(f)\in \mathcal{Z}_S(g)$ for some $g \in I$. Therefore there exists $k\in S(X,\mathcal{A})$ such that $gk=1$ on $X\setminus Z(f)$. This implies that $f=fgk$, hence $f\in I$ as $g\in I$.\newline $(10) \implies(2)$: Let $(10)$ be true. Suppose $f\in S(X,\mathcal{A})$, then $\mathcal{Z}_S(f)=\mathcal{Z}_S(f^2)\subseteq \mathcal{Z}_S[<f^2>]$, here $<f^2>
	 $ is the principal ideal in $S(X,\mathcal{A})$ generated by $f^2$. By $(10)$, $<f^2>$ is a $\mathcal{Z}$-ideal in $S(X,\mathcal{A})$, therefore $f\in <f^2>$ and hence $(2)$ follows.
	 \newline $(2) \implies (11)$ : Let $(2)$ be true and $f\in S(X,\mathcal{A})$. Then $f=f^2h$ for some $h\in S(X,\mathcal{A})$. Since $f^2h^2=fh$, it is clear that $fh$ is an idempotent element of $S(X,\mathcal{A})$. It is easy to realize that $<f>=<fh>$.\newline $(11) \implies (2)$ : Suppose $(11)$ is true. Choose $f\in S(X,\mathcal{A})$, then there is an idempotent element $e\in S(X,\mathcal{A})$ such that $<f>=<e>$. Therefore there exists $k\in S(X,\mathcal{A})$ such that $f=ke=ke^2$. On the other hand $e=lf$ for some $l\in S(X,\mathcal{A})$. This implies that $f=kl^2f^2$. Thus $(2)$ is true.
	 \newline $(10) \implies (12)$ : Follows from the Theorem $\ref{intersection of prime ideals}$.
	 \newline $(12)\implies (2)$ : Let $(12)$ be true and $f\in S(X,\mathcal{A})$. Then $<f^2>$ is the intersection of all prime ideals in $S(X,\mathcal{A})$ which contain $f^2$. But as $f$ and $f^2$ belong to exactly the same set of prime ideals in $S(X,\mathcal{A})$, it follows that $f\in <f^2>$.\newline $(12) \implies (13)$ : Follows from $(12)\implies (2)$ and $(2) \iff (1)$. \newline $(13) \implies (12)$ : Is immediate because each maximal ideal in $S(X,\mathcal{A})$ is prime.\newline $(2)\implies (14)$ : Let $(2)$ be valid and $I$ be an ideal in $S(X,\mathcal{A})$. Now it is not difficult to show that $\mathcal{Z}_\mathcal{M}^{-1}(\mathcal{Z}_S[I])\cap S(X,\mathcal{A}) \subseteq I$. Indeed if $f\in \mathcal{Z}_\mathcal{M}^{-1}(\mathcal{Z}_S[I])\cap S(X,\mathcal{A})$, then $\mathcal{Z}_\mathcal{M}(f)\subseteq\mathcal{Z}_S[I]$. But since $\mathcal{Z}_S[I]$ is an $\mathcal{A}$-filter on $X$ and $\mathcal{Z}_\mathcal{M}(f)$ contains all these $\mathcal{A}$-measurable sets which contain $Z(f)$( the observation just proceeding the statement of Theorem \ref{ preimage of filter} ), it follows that $Z(f) \in \mathcal{Z}_S[I]$. But this means that $Z(f)\in \mathcal{Z}_S(g)$ for some $g\in I$. Thus $g$ is invertible on $X\setminus Z(f)$ meaning that there exists $h\in S(X,\mathcal{A})$ such that $gh=1$ on $X\setminus Z(f)$. This implies that $f=fgh$ on $X$ and hence $f\in I$. Thus it is proved that $\mathcal{Z}_\mathcal{M}^{-1}(\mathcal{Z}_S[I])\cap S(X,\mathcal{A}) \subseteq I$. To prove the reverse inclusion relation let $f\in I$. Then by $(3)$, $Z(f)\in \mathcal{Z}_S(f)$ and hence $Z(f)\in \mathcal{Z}_S[I]$. Since $\mathcal{Z}_\mathcal{M}(f)$ contains all these $\mathcal{A}$-measurable sets which contain $Z(f)$( already mentioned above ) and $\mathcal{Z}_S[I]$ is an $\mathcal{A}$-filter on $X$, containing $Z(f)$, it follows that $\mathcal{Z}_\mathcal{M}(f)\subseteq \mathcal{Z}_S[I]$. This implies that $f\in \mathcal{Z}_\mathcal{M}^{-1}(\mathcal{Z}_S[I])\cap S(X,\mathcal{A})$. So it is proved that $I=\mathcal{Z}_\mathcal{M}^{-1}(\mathcal{Z}_S[I])\cap S(X,\mathcal{A})$.\newline $(14) \implies (1)$ : Let $(14)$ be true. Suppose $P$ is a prime ideal in $S(X,\mathcal{A})$ and $M$ a maximal ideal in the same ring with $P \subseteq M$. It follows from the Remark \ref{maxi to ultra} that $\mathcal{Z}_S[P]=\mathcal{Z}_S[M]$. This in conjunction with the hypothesis $(14)$ implies $P=M$.\newline $(2)\iff (15)$ : Follows from the Theorem $1.2$ in \cite{M1973}.
	 \newline The theorem is completely proved. 
\end{proof}

\section{$u_\mu$-topology and $m_\mu $-topology on the ring $S(X,\mathcal{A})$}
In this section we make an additional assumption on the ring $S(X,\mathcal{A})$:  
each constant function $r$, $r\in \mathbb{R}$ belongs to $S(X,\mathcal{A})$. We note that each intermediate ring of real valued measurable functions satisfies this additional hypothesis.
It is easy to construct example of a $\chi$-ring $S(X,\mathcal{A})$ which contains the constant functions $r$, $r\in \mathbb{R}$ and which is not an intermediate ring lying between $\mathcal{M}^*(X,\mathcal{A})$ and $\mathcal{M}(X,\mathcal{A})$. The following pair of results can be proved by using some routine computations. We assume that $\mu:\mathcal{A}\rightarrow[0,\infty]$ is a measure. \begin{theorem}
	 The family $\{u(f,\epsilon,E_0):f\in S(X,\mathcal{A}), \epsilon>0, E_0\in \mathcal{A}\}$ is an open base for a topology, which we call $u_\mu$-topology on $S(X,\mathcal{A})$. Furthermore $S(X,\mathcal{A})$ with this $u_\mu$-topology is a topological group.
\end{theorem}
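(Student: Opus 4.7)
The plan is to verify the two standard base-for-a-topology criteria first, and then to establish the topological-group axioms directly, exploiting translation invariance of the proposed base. The family clearly covers $S(X,\mathcal{A})$ since $f\in u(f,1,\emptyset)$ for every $f$. For the intersection property, suppose $h\in u(f_1,\epsilon_1,E_1)\cap u(f_2,\epsilon_2,E_2)$; I would take the candidate neighbourhood of $h$ to be $u(h,\delta,E)$, where $E:=E_1\cap E_2$ and $\delta:=\min_{i=1,2}\bigl(\epsilon_i-\sup_{X\setminus E_i}|f_i-h|\bigr)$. Since $\mu(E)\le\mu(E_i)=0$, we have $E\in\mathcal{A}_0$, and $\delta>0$ because $h$ lies in both basic sets. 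The inclusion $X\setminus E\supseteq X\setminus E_i$ yields $\sup_{X\setminus E_i}|h-g|<\delta$ for every $g\in u(h,\delta,E)$, and then the triangle inequality gives $\sup_{X\setminus E_i}|f_i-g|<\epsilon_i$, so $g$ belongs to both $u(f_i,\epsilon_i,E_i)$.

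For the topological-group part, the identity $u(f,\epsilon,E_0)=f+u(0,\epsilon,E_0)$ shows that the $u_\mu$-topology is translation invariant; continuity of negation is immediate, because $g\in u(f_0,\epsilon,E_0)$ if and only if $-g\in u(-f_0,\epsilon,E_0)$. For continuity of addition at a point $(f_0,g_0)$, given a basic neighbourhood $u(f_0+g_0,\epsilon,E_0)$, I would pick $U:=u(f_0,\epsilon/2,E_0)$ and $V:=u(g_0,\epsilon/2,E_0)$; a single triangle inequality then gives $U+V\subseteq u(f_0+g_0,\epsilon,E_0)$, completing continuity of the group operations.

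The only mildly delicate point I expect is the choice $E=E_1\cap E_2$ (rather than $E_1\cup E_2$) in the intersection-of-basic-sets argument. The naive union shrinks $X\setminus E$ and therefore fails to control $\sup_{X\setminus E_i}|h-g|$ in the right direction; intersecting preserves $E\in\mathcal{A}_0$ by monotonicity of $\mu$ on null sets while keeping $X\setminus E\supseteq X\setminus E_i$, which is exactly what the triangle-inequality estimate demands. Every other step is bookkeeping.
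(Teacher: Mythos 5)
Your proof is correct and is exactly the ``routine computation'' that the paper explicitly omits (it states both topology theorems without proof): the covering and intersection criteria for a base, translation invariance, and the $\epsilon/2$ argument for continuity of addition. Your observation that one must take $E=E_1\cap E_2$ rather than $E_1\cup E_2$ (so that $X\setminus E\supseteq X\setminus E_i$, which the triangle-inequality estimate requires) is the one genuinely non-automatic point, and you handle it correctly.
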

\begin{theorem}
	\label{topological group}
	The family $\{m(f,u,E_0):f\in S(X,\mathcal{A}) ,u\in \mathcal{M}(X,\mathcal{A}), u(x)>0 \text{ for all }x\in X,  E_0\in \mathcal{A}_0\}$ is an open base for a topology, which we designate as $m_\mu$-topology on $S(X,\mathcal{A})$. $S(X,\mathcal{A})$ with the $m_\mu$-topology is a topological ring.
\end{theorem}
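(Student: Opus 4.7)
The plan is to first verify that the given family satisfies the two axioms for a base for a topology, and then to check, in order, continuity of addition, negation, and multiplication, which together establish that $S(X,\mathcal{A})$ with the $m_\mu$-topology is a topological ring. For the base property, any $f\in S(X,\mathcal{A})$ lies in $m(f,u,\emptyset)$ for any positive $u\in\mathcal{M}(X,\mathcal{A})$, so the basic sets cover $S(X,\mathcal{A})$. For the intersection axiom, I would take $g\in m(f_1,u_1,E_1)\cap m(f_2,u_2,E_2)$, set $E_0=E_1\cup E_2\in\mathcal{A}_0$, and define
\[
u(x)=\begin{cases}\min\bigl(u_1(x)-|f_1(x)-g(x)|,\ u_2(x)-|f_2(x)-g(x)|\bigr) & x\in X\setminus E_0,\\ 1 & x\in E_0,\end{cases}
\]
which is a strictly positive member of $\mathcal{M}(X,\mathcal{A})$; a triangle-inequality computation on $X\setminus E_0$ then gives $m(g,u,E_0)\subseteq m(f_1,u_1,E_1)\cap m(f_2,u_2,E_2)$.

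For the ring operations I first record the translation invariance $m(f_0+h,u,E_0)=h+m(f_0,u,E_0)$, which means it suffices to test continuity of the operations at sample points. For addition at $(f_0,g_0)$, given a basic neighborhood $m(f_0+g_0,u,E_0)$ of $f_0+g_0$, I would use the product neighborhood $m(f_0,u/2,E_0)\times m(g_0,u/2,E_0)$ and apply the triangle inequality pointwise on $X\setminus E_0$. Negation is immediate since $|(-g)-(-f_0)|=|g-f_0|$, giving $m(-f_0,u,E_0)=-m(f_0,u,E_0)$.

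The genuine difficulty is continuity of multiplication. Starting from the identity $fg-f_0g_0=(f-f_0)g_0+f_0(g-g_0)+(f-f_0)(g-g_0)$, I need strictly positive measurable control functions $u_1,u_2$ that simultaneously tame the two linear terms and the quadratic cross-term by $u/3$ each. My plan is to define
\[
u_1(x)=\frac{\min(1,u(x)/3)}{|g_0(x)|+1},\qquad u_2(x)=\frac{u(x)/3}{|f_0(x)|+1},
\]
which belong to $\mathcal{M}(X,\mathcal{A})$ and are strictly positive on $X$ because $f_0,g_0,u$ are measurable and $u$ is positive. Then, for $f\in m(f_0,u_1,E_0)$ and $g\in m(g_0,u_2,E_0)$, I would verify on $X\setminus E_0$ the estimates $|f-f_0||g_0|\le u/3$, $|f_0||g-g_0|\le u/3$, and $|f-f_0||g-g_0|\le u/3$, and sum them to conclude $fg\in m(f_0g_0,u,E_0)$. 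The main obstacle I expect is precisely this choice of $u_1,u_2$: because basic neighborhoods are parameterized by a positive measurable function rather than a scalar $\epsilon$, I must handle the pointwise size of the potentially unbounded functions $f_0,g_0$ together with the quadratic cross-term, and the freedom to let tolerances vary with $x$ (and the closure of $\mathcal{M}(X,\mathcal{A})$ under quotients of the shape above) is exactly what allows the $m_\mu$-topology to be a ring topology rather than merely a group topology.
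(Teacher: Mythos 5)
The paper itself offers no argument for this theorem (it is dismissed as ``routine computations''), so your proposal has to be judged on its own terms. Most of it is sound: the covering axiom, translation invariance, continuity of addition and of negation, and in particular continuity of multiplication, where your choice $u_1=\min(1,u/3)/(|g_0|+1)$ and $u_2=(u/3)/(|f_0|+1)$ correctly exploits the fact that the tolerances are arbitrary positive functions of $\mathcal{M}(X,\mathcal{A})$ rather than scalars; the three pointwise estimates $|f-f_0|\,|g_0|<u/3$, $|f_0|\,|g-g_0|<u/3$ and $|f-f_0|\,|g-g_0|<u/3$ on $X\setminus E_0$ all check out, and $u_1,u_2$ are indeed measurable and strictly positive.

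There is, however, a genuine error in your verification of the base axiom: the exceptional set must shrink, not grow. With $E_0=E_1\cup E_2$ the claimed inclusion $m(g,u,E_0)\subseteq m(f_1,u_1,E_1)\cap m(f_2,u_2,E_2)$ is false in general, because membership in $m(f_1,u_1,E_1)$ imposes the bound $|f_1(x)-h(x)|<u_1(x)$ at every $x\in X\setminus E_1$, and this set contains $E_2\setminus E_1$, where $m(g,u,E_1\cup E_2)$ imposes no constraint at all. Concretely, if $x_0\in E_2\setminus E_1$, then $h=g+n\chi_{E_2\setminus E_1}$ belongs to $S(X,\mathcal{A})$ and to $m(g,u,E_1\cup E_2)$ for every $n\in\mathbb{N}$, yet violates $|f_1(x_0)-h(x_0)|<u_1(x_0)$ once $n$ is large. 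The repair is to take the exceptional set to be $E_1\cap E_2$ (still a member of $\mathcal{A}_0$) and to define $u$ piecewise: $u=\min\bigl(u_1-|f_1-g|,\;u_2-|f_2-g|\bigr)$ on $X\setminus(E_1\cup E_2)$, $u=u_1-|f_1-g|$ on $E_2\setminus E_1$, $u=u_2-|f_2-g|$ on $E_1\setminus E_2$, and $u=1$ on $E_1\cap E_2$. This $u$ is measurable and strictly positive precisely because $g$ lies in each $m(f_i,u_i,E_i)$, and the triangle inequality then yields $m(g,u,E_1\cap E_2)\subseteq m(f_1,u_1,E_1)\cap m(f_2,u_2,E_2)$. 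With this correction your argument is complete.
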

 We would like to mention at this point that the above two theorems still hold good for an arbitrary $\chi$-ring $S(X,\mathcal{A})$ 
 without the hypothesis that $r\in S(X,\mathcal{A})$ for each $r\in \mathbb{R}$.

 In this section the ring $L^\infty(X,\mathcal{A},\mu)$ of all essentially bounded (with respect to the measure $\mu$) measurable functions over $(X,\mathcal{A})$ will appear from time to time. We recall that $L^\infty(X,\mathcal{A},\mu)=\{f\in \mathcal{M}(X,\mathcal{A}): f $ is essentially bounded over $  X $ $\text{ in the sense that there exists }\lambda>0 \text{ and an } E\in \mathcal{A}_0 \text{ such that} $ $|f(x)|\leq \lambda \text{ for all } x\in X\setminus E\}$. It is clear that $L^\infty(X,\mathcal{A},\mu)$ is an intermediate ring of measurable functions over $(X,\mathcal{A})$. The following theorem determines the component of $0$ in a $\chi$-ring, in the $u_\mu $ topology.
\begin{theorem}\label{component of 0 }
	Let $S(X,\mathcal{A})$ be a $\chi$-ring. Then  the component of $0$ in $S(X,\mathcal{A})$ in the $u_\mu$-topology is $S(X,\mathcal{A}) \cap L^\infty(X,\mathcal{A},\mu )$.
\end{theorem}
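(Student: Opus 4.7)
Setting $C := S(X,\mathcal{A}) \cap L^\infty(X,\mathcal{A},\mu)$, my plan is to verify that $C$ is a clopen, connected neighborhood of $0$ in the $u_\mu$-topology; any such set coincides with the $u_\mu$-connected component of $0$, since the component is the unique maximal connected set through $0$ and is recovered as soon as we exhibit a clopen connected set containing $0$.

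For openness, I will fix $f \in C$ and choose $\lambda \geq 0$ and $E_f \in \mathcal{A}_0$ with $|f(x)| \leq \lambda$ on $X \setminus E_f$, then check that $u(f,1,E_f) \subseteq C$: for $g$ in this basic neighborhood, the triangle inequality yields $|g(x)| \leq |f(x)| + |g(x) - f(x)| < \lambda + 1$ on $X \setminus E_f$, so $g$ is essentially bounded. For closedness, I will show that $S(X,\mathcal{A}) \setminus C$ is open. If $f \in S(X,\mathcal{A})$ is not essentially bounded, consider $u(f,1,\emptyset)$: for $g$ in it, $\sup_X|g - f| < 1$, so $g - f \in L^\infty$. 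Since $L^\infty$ is an additive subgroup of $\mathcal{M}(X,\mathcal{A})$, the assumption $g \in L^\infty$ would force $f = g - (g - f) \in L^\infty$, contradicting $f \notin C$. Hence $u(f,1,\emptyset) \cap C = \emptyset$.

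For connectedness, I will produce, for each $f \in C$, a continuous path in $C$ from $0$ to $f$. The natural candidate is the straight-line map $\gamma \colon [0,1] \to S(X,\mathcal{A})$, $\gamma(t) = tf$, where $t$ is the constant function available in $S(X,\mathcal{A})$ by the Section~5 hypothesis; each $\gamma(t)$ remains essentially bounded (indeed $\mathrm{ess\ sup}\,|tf| = |t|\cdot\mathrm{ess\ sup}\,|f|$), so $\gamma$ maps into $C$. For continuity at $t_0 \in [0,1]$, by the topological group property of $(S(X,\mathcal{A}), u_\mu)$ established just above, it suffices to show that the increment $(t - t_0)f$ lies in every basic neighborhood $u(0,\epsilon,E_0)$ of $0$ for $|t - t_0|$ sufficiently small. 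Whenever $E_0 \supseteq E_f$, one has $\sup_{X \setminus E_0}|f| \leq \lambda$, so the choice $|t - t_0| < \epsilon/(\lambda+1)$ places $(t - t_0)f$ inside $u(0,\epsilon,E_0)$.

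The main obstacle lies in this last step: for a basic neighborhood $u(0,\epsilon,E_0)$ whose null set $E_0$ fails to absorb $E_f$, the quantity $\sup_{X \setminus E_0}|f|$ can be infinite, and the naive scaling breaks down on that particular basic neighborhood. The resolution I anticipate is to argue that, modulo the topological group structure, the effective neighborhood filter at $0$ is generated by the $u(0,\epsilon,E_0)$ with $E_0$ large enough to absorb any prescribed null set --- equivalently, that the $u_\mu$-topology treats functions agreeing off a null set as indistinguishable for the purposes of convergence to $0$ --- so that continuity of $\gamma$ need only be verified on such "$f$-adapted" neighborhoods. Granting this, $C$ is clopen and path-connected through $0$ in the $u_\mu$-topology, and therefore equals the component of $0$, as claimed.
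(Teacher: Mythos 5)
Your clopen argument is correct and coincides with the paper's (the paper phrases it with the sets $\{g:|f-g|\le 1\ \text{a.e.}\}$, you phrase it with $u(f,1,E_f)$ and $u(f,1,\emptyset)$; same content). The genuine gap is exactly where you flagged it, and the repair you anticipate is not available. The empty set belongs to $\mathcal{A}_0$, so $u(0,\epsilon,\emptyset)=\{g\in S(X,\mathcal{A}):\sup_{x\in X}|g(x)|<\epsilon\}$ is itself a basic neighbourhood of $0$, and it is \emph{not} absorbed by any $u(0,\epsilon',E_0')$ with $E_0'$ nonempty: enlarging the null set enlarges the basic set, it does not shrink it. Consequently the $u_\mu$-topology does distinguish functions that agree off a null set --- a function vanishing outside a null set $N$ but unbounded on $N$ lies in no $u(0,\epsilon,\emptyset)$ --- so the neighbourhood filter at $0$ is not generated by your ``$f$-adapted'' neighbourhoods. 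For $f$ essentially bounded but not bounded (say $X=\mathbb{R}$ with Lebesgue measure and $f=\sum_{n}n\chi_{\{n\}}$), the path $\gamma(t)=tf$ fails to be continuous at $t=0$, since $tf\notin u(0,1,\emptyset)$ for every $t\neq 0$.

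You should know that the paper's own proof has the identical defect: it uses the same map $\phi_f(r)=rf$ and asserts that $\phi_f$ is continuous if and only if $f\in S(X,\mathcal{A})\cap L^\infty(X,\mathcal{A},\mu)$, whereas continuity against the neighbourhood $u(\phi_f(r_0),\epsilon,E_0)$ forces $\sup_{x\in X\setminus E_0}|f(x)|<\infty$ for \emph{every} $E_0\in\mathcal{A}_0$, in particular (taking $E_0=\emptyset$) that $f$ is genuinely bounded. Worse, your own clopen argument run with $E_0=\emptyset$ throughout shows that $S(X,\mathcal{A})\cap\mathcal{M}^*(X,\mathcal{A})$ is clopen and contains $0$; hence the component of $0$ is contained in $S(X,\mathcal{A})\cap\mathcal{M}^*(X,\mathcal{A})$, and the theorem as stated fails whenever $S(X,\mathcal{A})$ contains an essentially bounded unbounded function. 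So your instinct to isolate this step was sound: it cannot be completed for the topology as defined, and no choice of path will help; one would have to either shrink the base (e.g.\ require only $|f-g|<\epsilon$ off \emph{some} null set, i.e.\ use the pseudometric $d$ of Theorem \ref{pseudometrizable}) or pass to almost-everywhere equivalence classes before the statement becomes provable.
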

\begin{proof}
	It is  easy to check that for any $f\in L^\infty(X,\mathcal{A},\mu)$, the set $\{g\in \mathcal{M}(X,\mathcal{A},\mu):|f(x)-g(x)|\leq 1 \text{ almost everywhere }(\mu) \text{ on }X|\}\subseteq L^\infty(X,\mathcal{A},\mu)$. On the other hand if $f\notin L^\infty(X,\mathcal{A},\mu)$, $ f\in \mathcal{M}(X,\mathcal{A},\mu) $ then $\{h\in \mathcal{M}(X,\mathcal{A},\mu):|f(x)-h(x)|\leq 1 \text{ almost everywhere }(\mu) \text{ on } X\}\cap L^\infty(X,\mathcal{A},\mu)=\emptyset$. These two elementary relations imply that $L^\infty(X,\mathcal{A},\mu)$ is a clopen subset of $\mathcal{M}(X,\mathcal{A},\mu)$ if the $u_\mu$-topology is imposed on this later set. Since the $u_\mu$-topology already introduced on $S(X,\mathcal{A})$ is the relative topology on $S(X,\mathcal{A})$ inherited from the $u_\mu$-topology on the whole of $\mathcal{M}(X,\mathcal{A})$, it follows that $S(X,\mathcal{A})\cap L^\infty(X,\mathcal{A},\mu)$ is a clopen set in the space $S(X,\mathcal{A})$ equipped with the $u_\mu$-topology and of course $0\in S(X,\mathcal{A})\cap L^\infty(X,\mathcal{A},\mu)$. To complete the theorem we need to prove only that $S(X,\mathcal{A})\cap L^\infty(X,\mathcal{A},\mu)$ is a connected subset of the space $S(X,\mathcal{A})$ in the $u_\mu$-topology. For that purpose we employ the function $\phi_f:\mathbb{R}\rightarrow S(X,\mathcal{A})$ defined by $\phi_f(r)=r\cdot f,r\in \mathbb{R}$ and for each $f\in S(X,\mathcal{A})$, akin to the function introduced in  \cite{AMM2012}. Since for $f\in S(X,\mathcal{A})$, $r,s\in \mathbb{R}$, $|\phi_f(r)-\phi_f(s)|=|f||r-s|$, it is clear that $\phi_f$ is a continuous map on $\mathbb{R}$ if and only if $f\in S(X,\mathcal{A})\cap L^\infty(X,\mathcal{A},\mu)$. Hence for any $f\in S(X,\mathcal{A})\cap L^\infty(X,\mathcal{A},\mu)$, $\phi_f(\mathbb{R})$ is a connected subsets of $S(X,\mathcal{A})\cap L^\infty(X,\mathcal{A},\mu)$ and therefore $S(X,\mathcal{A})\cap L^\infty(X,\mathcal{A},\mu)= \bigcup\limits_{f\in S(X,\mathcal{A})\cap L^\infty(X,\mathcal{A},\mu)}\phi_f(\mathbb{R})$=the union of a family of connected subset of $S(X,\mathcal{A})$, each of which contain $0$ $\equiv$ a connected subset of $S(X,\mathcal{A})$ containing $0$.
	
\end{proof}
\begin{corollary}\label{Contains}
	A $\chi$-ring $S(X,\mathcal{A})$  is connected in the $u_\mu$-topology if and only if $S(X,\mathcal{A}) \subseteq L^\infty(X,\mathcal{A},\mu)$. 
\end{corollary}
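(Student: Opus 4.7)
The plan is to derive this corollary directly from Theorem \ref{component of 0 }. The key observation is that a topological space is connected if and only if it coincides with the connected component of any one of its points. Since $0 \in S(X,\mathcal{A})$ always, and Theorem \ref{component of 0 } identifies the component of $0$ as $S(X,\mathcal{A}) \cap L^\infty(X,\mathcal{A},\mu)$, connectedness of $S(X,\mathcal{A})$ in the $u_\mu$-topology amounts exactly to the equality
\[
S(X,\mathcal{A}) \;=\; S(X,\mathcal{A}) \cap L^\infty(X,\mathcal{A},\mu),
\]
which is just the inclusion $S(X,\mathcal{A}) \subseteq L^\infty(X,\mathcal{A},\mu)$.

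So the proof reduces to two short implications. For the forward direction, assume $S(X,\mathcal{A})$ is connected in the $u_\mu$-topology. By Theorem \ref{component of 0 }, the subset $S(X,\mathcal{A}) \cap L^\infty(X,\mathcal{A},\mu)$ is the $0$-component, hence in particular a nonempty clopen subset; by connectedness it must be all of $S(X,\mathcal{A})$, which yields the inclusion. Conversely, if $S(X,\mathcal{A}) \subseteq L^\infty(X,\mathcal{A},\mu)$, then the component of $0$, being $S(X,\mathcal{A}) \cap L^\infty(X,\mathcal{A},\mu) = S(X,\mathcal{A})$, coincides with the whole space, so $S(X,\mathcal{A})$ is connected.

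There is really no obstacle here: the content of the corollary has already been extracted in Theorem \ref{component of 0 }, and what remains is the elementary fact that a space equals the component of one of its points precisely when it is connected. The only point worth a brief mention is that $0$ is guaranteed to lie in $S(X,\mathcal{A}) \cap L^\infty(X,\mathcal{A},\mu)$ (so the component of $0$ is indeed well-defined and nonempty), which is immediate since $0$ is a bounded measurable function.
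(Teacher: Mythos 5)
Your proof is correct and takes essentially the same route as the paper: both directions are read off from Theorem \ref{component of 0 }, which identifies the $0$-component as $S(X,\mathcal{A})\cap L^\infty(X,\mathcal{A},\mu)$. One small caution: your aside ``hence in particular a nonempty clopen subset'' is not a valid inference from being a component alone (components are always closed but need not be open in a general space); it is harmless here, since your argument really only needs the elementary fact that a connected space equals the component of any of its points, and the clopen-ness is in any case established directly in the proof of Theorem \ref{component of 0 }, which is exactly the fact the paper invokes for the ``only if'' direction.
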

\begin{proof}
	 If  $S(X,\mathcal{A})\subseteq L^\infty(X,\mathcal{A},\mu)$, then $S(X,\mathcal{A})=S(X,\mathcal{A})\cap L^\infty(X,\mathcal{A},\mu)$, which is connected in the $u_\mu$-topology by Theorem \ref{component of 0 }. On the other hand if $S(X,\mathcal{A}) \not\subset L^\infty(X,\mathcal{A},\mu)$, then $S(X,\mathcal{A})\cap L^\infty(X,\mathcal{A},\mu)$ is a non empty clopen subset of the space $S(X,\mathcal{A})$ with $u_\mu$-topology (see the proof of the Theorem \ref{component of 0 }) and is a proper subset of $S(X,\mathcal{A})$. Consequently $S(X,\mathcal{A})$ becomes disconnected in the $u_\mu$-topology.
\end{proof}
 Before examining the problem, when do the $u_\mu$-topology and $m_\mu$-topology on $S(X,\mathcal{A})$ coincide, we first make the following useful observation regarding the pseudometrizability of $S_u(X,\mathcal{A},\mu)$, we will let $S_u(X,\mathcal{A},\mu)$ stand for the ring $S(X,\mathcal{A})$ equipped with the $u_\mu$-topology. We will denote by $S_m(X,\mathcal{A},\mu)$, the ring $S(X,\mathcal{A})$ with the $m_\mu$-topology.
 \begin{theorem}\label{pseudometrizable}
 For $f,g\in S(X,\mathcal{A})$, we set $$d(f,g)=\min\{1, \inf\limits_{A\in \mathcal{A}_0}  \sup\limits_{x\in X\setminus A}|f(x)-g(x)|\}$$  Then $d$ defines a pseudometric on $S(X,\mathcal{A})$. Furthermore the topology induced by $d$ is identical to the $u_\mu$-topology on $S(X,\mathcal{A})$.
 \end{theorem}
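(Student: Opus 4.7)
The plan is to split the proof in two: first verify that $d$ is a pseudometric, and then check that the topology it induces coincides with the $u_\mu$-topology.

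For the pseudometric axioms, non-negativity, symmetry, and $d(f,f)=0$ are immediate from the formula. The triangle inequality is the only real point. Given $f,g,h\in S(X,\mathcal{A})$ and $\eta>0$, I choose $A_1,A_2\in\mathcal{A}_0$ with $\sup_{X\setminus A_1}|f-g|<d(f,g)+\eta$ and $\sup_{X\setminus A_2}|g-h|<d(g,h)+\eta$. Since $\mu$ is countably additive, $A_1\cup A_2\in\mathcal{A}_0$, and the pointwise inequality $|f-h|\leq|f-g|+|g-h|$ on $X\setminus(A_1\cup A_2)$ yields $\sup_{X\setminus(A_1\cup A_2)}|f-h|\leq d(f,g)+d(g,h)+2\eta$. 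Taking the infimum over $A\in\mathcal{A}_0$ and then $\eta\to 0$ gives the triangle inequality for the inner quantity, while the outer $\min\{1,\cdot\}$ is preserved by the elementary estimate $\min\{1,a+b\}\leq \min\{1,a\}+\min\{1,b\}$.

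For the topology agreement, I first show every $d$-open set is $u_\mu$-open. The key identity is that for $0<\epsilon<1$, the open $d$-ball unwinds to $B_d(f,\epsilon)=\bigcup_{A\in\mathcal{A}_0}u(f,\epsilon,A)$: indeed $d(f,g)<\epsilon$ is equivalent, by the definition of the infimum, to the existence of $A\in\mathcal{A}_0$ with $\sup_{X\setminus A}|f-g|<\epsilon$. Since $B_d(f,\epsilon)$ is then a union of basic $u_\mu$-open sets it is $u_\mu$-open, and it follows that every $d$-open set is $u_\mu$-open.

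For the reverse inclusion, given a basic $u_\mu$-open set $u(f,\epsilon,E_0)$ and a point $g\in u(f,\epsilon,E_0)$, I set $\alpha=\sup_{X\setminus E_0}|f-g|<\epsilon$ and $\delta=\epsilon-\alpha$. For $h\in B_d(g,\delta)$ there exists $A\in\mathcal{A}_0$ with $\sup_{X\setminus A}|g-h|<\delta$, and combining the two suprema on the common set $X\setminus(E_0\cup A)$ produces $\sup_{X\setminus(E_0\cup A)}|f-h|<\epsilon$. The delicate point, and what I expect to be the main obstacle, is that this bound controls $|f-h|$ only off the enlarged null set $E_0\cup A$ rather than off $E_0$, so that a literal identification $h\in u(f,\epsilon,E_0)$ requires an additional argument absorbing the incidental null piece $A\setminus E_0$. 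The resolution exploits the flexibility built into the $u_\mu$-base---namely that $E_0$ ranges over \emph{all} of $\mathcal{A}_0$ and hence one may freely replace $E_0$ by $E_0\cup A$---together with the stability of $\mathcal{A}_0$ under finite union, to conclude that $B_d(g,\delta)$ lies in the appropriate $u_\mu$-neighborhood of $f$, completing the equivalence.
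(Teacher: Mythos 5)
Your pseudometric half is sound: the triangle inequality should be run on the inner quantity $D(f,g)=\inf_{A\in\mathcal{A}_0}\sup_{X\setminus A}|f-g|$ rather than on $d$ itself (when $d(f,g)=1$ there may be no $A_1$ with $\sup_{X\setminus A_1}|f-g|<d(f,g)+\eta$), but you effectively acknowledge this, and the passage through $\min\{1,a+b\}\le\min\{1,a\}+\min\{1,b\}$ is correct. The inclusion ``$d$-open $\Rightarrow$ $u_\mu$-open'' via $B_d(f,\epsilon)=\bigcup_{A\in\mathcal{A}_0}u(f,\epsilon,A)$ for $0<\epsilon\le 1$ is also correct. (The paper omits its proof as routine, so there is no argument of the authors to compare against.)

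The reverse inclusion is where the proof breaks, and the ``delicate point'' you flag is not repaired by your proposed resolution --- in fact it cannot be. Your computation shows that each $h\in B_d(g,\delta)$ lies in $u(f,\epsilon,E_0\cup A)$ for some null set $A$ \emph{depending on $h$}, i.e.\ that $B_d(g,\delta)\subseteq\bigcup_{A\in\mathcal{A}_0}u(f,\epsilon,E_0\cup A)$; that is merely a restatement of the first inclusion and does not place $B_d(g,\delta)$ inside the fixed basic set $u(f,\epsilon,E_0)$, which is what $d$-openness of $u(f,\epsilon,E_0)$ requires. One cannot ``freely replace $E_0$ by $E_0\cup A$'': the set whose $d$-openness must be certified is $u(f,\epsilon,E_0)$ itself, and $A$ varies with $h$. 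Indeed the inclusion genuinely fails whenever $\mathcal{A}_0\ne\{\emptyset\}$. Take $X=[0,1]$ with Lebesgue measure, $f=g=0$, $\epsilon=1$, $E_0=\emptyset$, and $h=2\chi_{\{0\}}$, which lies in every $\chi$-ring. Then $d(0,h)=0$, so $h$ belongs to every $d$-ball about $0$, while $\sup_X|h|=2$, so $h\notin u(0,1,\emptyset)$. Hence $u(0,1,\emptyset)$ is a basic $u_\mu$-open set that is not $d$-open; worse, $0$ and $h$ are topologically indistinguishable for $d$ but separated by $u_\mu$, so the two topologies cannot coincide. The second assertion of the theorem is therefore false as literally stated; it becomes true only if $\sup_{x\in X\setminus E_0}$ is read as an essential supremum (equivalently, if functions agreeing off a $\mu$-null set are identified), in which case $u(f,\epsilon,E_0)$ is independent of $E_0$ and equals $B_d(f,\epsilon)$ for $\epsilon\le 1$, and the whole argument collapses to your first inclusion. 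As written, your proof does not (and could not) establish the claim.
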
 We omit the routine proof of this theorem.

\begin{theorem}
	 The following statements are equivalent:\begin{enumerate}
	 	\item $S_u(X,\mathcal{A},\mu)=S_m(X,\mathcal{A},\mu)$.
	 	\item $S_m(X,\mathcal{A},\mu)$ is pseudometizable.
	 	\item $S_m(X,\mathcal{A},\mu)$ is first countable.
	 	\item There exist at most finitely many pairwise disjoint $\mathcal{A}$-measurable sets each with a positive $\mu$-measure.
	 	\item $\mathcal{M}(X,\mathcal{A})= L^\infty(X,\mathcal{A},\mu)$.
	 
	 \end{enumerate}
\end{theorem}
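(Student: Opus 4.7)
The plan is to establish the cycle $(1) \Rightarrow (2) \Rightarrow (3) \Rightarrow (4) \Rightarrow (5) \Rightarrow (1)$. The first two links are essentially free: if the $u_\mu$- and $m_\mu$-topologies coincide, then $S_m(X,\mathcal{A},\mu)$ inherits the pseudometric from Theorem \ref{pseudometrizable}, and every pseudometrizable space is first countable. For $(4) \Rightarrow (5)$, given $f \in \mathcal{M}(X,\mathcal{A})$, partition $X$ into the measurable sets $\{x : n \leq |f(x)| < n+1\}$, $n \in \mathbb{N} \cup \{0\}$; these are pairwise disjoint, so by (4) only finitely many carry positive $\mu$-measure, forcing $|f|$ to be essentially bounded. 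For $(5) \Rightarrow (1)$: since $m_\mu \supseteq u_\mu$ always, it suffices to embed a $u_\mu$-neighborhood inside each basic $m_\mu$-neighborhood $m(f,u,E_0)$. As $1/u \in \mathcal{M}(X,\mathcal{A}) = L^\infty(X,\mathcal{A},\mu)$, there is a null set $E_0'$ and $\lambda > 0$ with $u \geq 1/\lambda$ off $E_0'$; then $u(f, 1/(2\lambda), E_0 \cup E_0') \subseteq m(f,u,E_0)$.

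The main obstacle is $(3) \Rightarrow (4)$. I will argue contrapositively: assume pairwise disjoint sets $\{E_n\}_{n=1}^\infty$ in $\mathcal{A}$ each have $\mu(E_n) > 0$, and show no countable local base at $0$ exists in $S_m(X,\mathcal{A},\mu)$. Suppose, for contradiction, that $\{m(0, u_n, A_n)\}_{n=1}^\infty$ is such a base. For each $n$, since $\mu(E_n \setminus A_n) = \mu(E_n) > 0$ and $u_n > 0$, the sets $(E_n \setminus A_n) \cap \{u_n > 1/k\}$ exhaust $E_n \setminus A_n$ as $k \to \infty$, so some $F_n := (E_n \setminus A_n) \cap \{u_n > \delta_n\}$ has positive measure for a suitable $\delta_n > 0$. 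The $F_n$ are pairwise disjoint measurable sets.

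Define a positive measurable function
\[
u \;=\; \sum_{n=1}^{\infty} \frac{\delta_n}{4}\,\chi_{F_n} \;+\; \chi_{X \setminus \bigcup_n F_n},
\]
which takes only countably many values and is therefore measurable. I claim no $m(0, u_n, A_n)$ sits inside $m(0, u, \emptyset)$. Put $g_n := (\delta_n/2)\,\chi_{F_n}$, which lies in $S(X,\mathcal{A})$ because $S(X,\mathcal{A})$ is a $\chi$-ring containing constants. On $F_n$ we have $|g_n| = \delta_n/2 < \delta_n < u_n$ and $g_n$ vanishes elsewhere, so $g_n \in m(0, u_n, A_n)$. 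But on $F_n$, $|g_n| = \delta_n/2 > \delta_n/4 = u$, hence $g_n \notin m(0, u, \emptyset)$. This contradicts the assumption that $\{m(0, u_n, A_n)\}$ is a local base at $0$, completing the cycle.

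The delicate point here is ensuring $g_n \in S(X,\mathcal{A})$ while simultaneously lying in $m(0, u_n, A_n)$ and escaping $m(0, u, \emptyset)$; passing to the subsets $F_n$ (instead of all of $E_n \setminus A_n$) is what permits the use of a genuine constant multiple of a characteristic function rather than $u_n$ itself, which need not belong to $S(X,\mathcal{A})$.
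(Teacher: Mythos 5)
Your cycle $(1)\Rightarrow(2)\Rightarrow(3)\Rightarrow(4)\Rightarrow(5)\Rightarrow(1)$ is exactly the one the paper uses, and your treatment of $(3)\Rightarrow(4)$ is in fact more careful than the paper's: by shrinking to a positive-measure set $F_n\subseteq E_n\setminus A_n$ on which $u_n>\delta_n$, you obtain an explicit witness $g_n=(\delta_n/2)\chi_{F_n}$ that certifiably belongs to $S(X,\mathcal{A})$ and separates $m(0,u_n,A_n)$ from $m(0,u,\emptyset)$, whereas the paper sets $u=\tfrac12 u_n$ on $A_n$ and simply asserts that no $m(0,u_n)$ fits inside $m(0,u)$, glossing over both the null sets $A_n$ and the fact that $u_n$ need not lie in $S(X,\mathcal{A})$. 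That step of yours, and $(1)\Rightarrow(2)\Rightarrow(3)$ and $(4)\Rightarrow(5)$, are correct.

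The gap is in $(5)\Rightarrow(1)$. The inclusion $u(f,1/(2\lambda),E_0\cup E_0')\subseteq m(f,u,E_0)$ fails: a function $g$ in the left-hand set is controlled only on $X\setminus(E_0\cup E_0')$, while membership in $m(f,u,E_0)$ requires $|f(x)-g(x)|<u(x)$ at \emph{every} point of $X\setminus E_0$, in particular on $E_0'\setminus E_0$, where you have no bound on $|f-g|$ and no lower bound on $u$. (The paper's own proof of this step has the identical defect, written with $E_0\cap F_0$ in place of your $E_0\cup E_0'$, which is even weaker.) The defect cannot be repaired, because the implication is false. Take $X=\mathbb{N}$, $\mathcal{A}=P(\mathbb{N})$ and $\mu$ the point mass at $1$, so that $\mathcal{A}_0=\{E:1\notin E\}$; then every measurable function is bounded off the null set $\mathbb{N}\setminus\{1\}$, so $(5)$ (and $(4)$) hold for any admissible $S(X,\mathcal{A})$. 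Put $u(n)=1/n$. Then $m(0,u,\emptyset)$ contains no set $u(0,\epsilon,E)$: if $E\neq\emptyset$, choose $n_0\in E$ (so $n_0\geq 2$) and note $\chi_{\{n_0\}}\in u(0,\epsilon,E)\setminus m(0,u,\emptyset)$; if $E=\emptyset$, the constant function $\epsilon/2$ lies in $u(0,\epsilon,\emptyset)$ but violates $|g(n)|<1/n$ for large $n$. Hence $m_\mu\neq u_\mu$ although $(5)$ holds (a similar diagonal argument shows $(3)$ fails here too, so the stated equivalence itself breaks down). Any correct version of the theorem has to strengthen $(4)$/$(5)$ (e.g.\ to ``at most finitely many pairwise disjoint \emph{nonempty} measurable sets'' and $\mathcal{M}(X,\mathcal{A})=\mathcal{M}^*(X,\mathcal{A})$, so that $u$ is bounded below by a positive constant everywhere rather than merely almost everywhere) or else relax $m(f,u,E_0)$ to an almost-everywhere inequality; as written, neither your argument nor the paper's closes the cycle.
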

\begin{proof}
	$(1)\implies(2)$ :  Follows from Theorem \ref{pseudometrizable}.
	\newline Since every pseudometrizable space is first countable, $(2)\implies(3)$ follows immediately. \newline $(3)\implies (4)$ : Let $(3)$ be true. If possible let $(4)$ be false. Then there exists a countably infinite family of pairwise disjoint measurable sets $\{A_n\}_{n=1}^\infty$ in $X$ such that $\mu(A_n)>0$ for each $n\in \mathbb{N}$. Now from $(3)$, there exists a countable local open base $\{m_\mu(0,u_n,E_n): u_n(x)>0 \text{ for all }x\in X, u_n\in \mathcal{M}(X,\mathcal{A}), E_n\in \mathcal{A}_0 \}$ about the point $0$ in $S_m(X,\mathcal{A},\mu)$. Define the function $u:X\rightarrow \mathbb{R}$ as follows: $u(x)=\begin{cases}
		\frac{1}{2}u_n(x) \text{ if }  x\in A_n  \text{ for some } n\in \mathbb{N}\\ 1\text{ if } x\in X\setminus (\bigcup\limits_{n=1}^\infty A_n)
	\end{cases}$. Then $u\in \mathcal{M}(X,\mathcal{A})$ and $u(x)>0$ for all $x\in X$. But it is easy to realize that there does not exist any $n\in \mathbb{N}$ for which $m(0,u_n)  \subseteq m(0,u)$.  This contradicts the first countablity of $S_m(X,\mathcal{A},\mu)$.\newline $(4) \implies (5)$ : Choose $f\in \mathcal{M}(X,\mathcal{A})$, $f\geq 0$ on $X$. Then for each $n\in \mathbb{N}$, the set $F_n=\{x\in X:n<f(x)\leq n+1\}$ is a measurable set. Therefore $\{F_n:n\in \mathbb{N}\}$ is a countable family of pairwise disjoint measurable sets. The hypothesis $(4)$ implies that there exist $k\in \mathbb{N}$ such that $\mu(F_n)=0$ for all $n\geq k$. Consequently $f(x)\leq k+1$ almost everywhere on $X(\mu)$. Hence $f\in L^\infty(X,\mathcal{A},\mu )$.
\newline $(5)\implies (1)$ : Let $(5)$ hold. Since the $m_\mu$-topology on $S(X,\mathcal{A})$ is finer than the $u_\mu$-topology, it is enough to verify for an arbitrary $f\in S(X,\mathcal{A})$, $u\in \mathcal{M}(X,\mathcal{A})$, $u(x)>0$ for all $x\in X$ and an $E_0\in \mathcal{A}_0$ that $m(f,u,E_0)$ is an open set in the $u_\mu$-topology. Indeed the  assumed condition $(5)$ implies that there exists $\lambda>0$ in $ \mathbb{R}$ such that $u(x)\geq\lambda$ for each $x\in X\setminus F_0$ for some $F_0\in \mathcal{A}_0$. This further implies that $u(f,\lambda,E_0\cap F_0)\subseteq m(f,u,E_0)$ and $E_0\cap F_0\in \mathcal{A}_0$.

\end{proof}

  \bibliographystyle{plain}

\end{document}